\documentclass{amsart}


\newtheorem{lemma}{Lemma}[section]
\newtheorem{theorem}[lemma]{Theorem}
\newtheorem{remark}[lemma]{Remark}

\newtheorem{coro}[lemma]{Corollary}
\newtheorem{definition}[lemma]{Definition}
\newtheorem{example}[lemma]{Example}


\parindent0.0em
\parskip0.7em


\title{I. U. Bronshtein's Conjecture for Monotone Nonautonomous
Dynamical Systems}

\author{David ~Cheban}
\address[D. Cheban]{%
State University of Moldova\\ Faculty of Mathematics and
Informatics\\ Department of Mathematics\\ A. Mateevich Street 60\\
MD--2009 Chi\c{s}in\u{a}u, Moldova} \email[D.
Cheban]{cheban@usm.md, davidcheban@yahoo.com}

\date{\today}
\subjclass[2010]{34C12, 34C27, 34D20, 37B20, 37B55}
\keywords{Dissipative differential equations; Global attractors,
Bohr/Levitan almost periodic and almost automorphic solutions;
Monotone nonautonomous dynamical systems}

\begin{document}
\begin{abstract}
In this paper we study the problem of almost periodicity of
solutions for dissipative differential equations (Bronshtein's
conjecture). We give a positive answer to this conjecture for
monotone almost periodic systems of differential/difference
equations.
\end{abstract}

\maketitle

\section{Introduction}\label{S1}

\textbf{I. U. Bronshtein's conjecture \cite[ChIV,p.273]{bro75}.}
If an equation
\begin{equation}\label{eqBC1}
x'=f(t,x)\ \ (f\in C(\mathbb R\times \mathbb R^{d},\mathbb R^d))
\end{equation}
with right hand side (Bohr) almost periodic in $t$ satisfies the
conditions of uniform positive stability and positive
dissipativity, then it has at least one (Bohr) almost periodic
solution.

\begin{remark} 1. If $d\le 3$, then the positive answer to this conjecture follows
from the results of V. V. Zhikov \cite[ChII]{Zhi69} (see also
\cite[ChVII]{Lev-Zhi} and \cite[ChIV]{bro75}).

2. Even for scalar equations ($d=1$) as was shown by A. M. Fink
and P. O. Frederickson \cite{FF} (see  also \cite[ChXII]{Fin}),
dissipation (without uniform positive stability) does not imply
the existence of almost periodic solutions.
\end{remark}

The aim of this paper is studying the problem of existence of
Levitan/Bohr almost periodic (respectively, almost automorphic,
recurrent and Poisson stable) solutions for dissipative
differential equation (\ref{eqBC1}), when the second right hand
side is monotone with respect to spacial variable. The existence
at least one quasi periodic (respectively, Bohr almost periodic,
almost automorphic, recurrent, pseudo recurrent, Levitan almost
periodic, almost recurrent, Poisson stable) solution of
(\ref{eqBC1}) is proved under the condition that every solution of
equation (\ref{eqBC1}) is positively uniformly Lyapunov stable.

The paper is organized as follows.

In Section 2 we collected some notions and facts from the theory
of dynamical systems (the both autonomous and nonautonomous) which
we use in this paper: Poisson stable motions and functions,
cocycles, skew-product dynamical systems, monotone non-autonomous
dynamical systems, Ellis semigroup.

Section 3 is dedicated to the study  the global attractors of
cocycles, when the phase space of driving system is noncompact.the
structure of the $\omega$-limit set of noncompact semitrajectory
for autonomous and nonautonomous dynamical systems.

In Section 4 we formulate I. U. Bronshtein's conjecture for
general nonautonomous dynamical systems. The positive answer for
monotone nonautonomous dynamical systems is given (Theorem
\ref{thBM1}, Corollary \ref{corPM1} and Remark \ref{remBM1}).

Section 5 is dedicated to the applications of our general results
for differential (Theorems \ref{thA1} and \ref{thA2}) and
difference (Theorems \ref{thD1} and \ref{thD2}) equations.

\section{Some general properties of autonomous and nonautonomous dynamical systems}\label{S2}

In this section we collect some notions and facts from the
autonomous and non-autonomous dynamical systems \cite{Che_2001}
(see also, \cite[Ch.IX]{Che_2015}) which we will use below.

\subsection{Cocycles.}

Let $Y$ be a complete metric space, let $\mathbb
R:=(-\infty,+\infty)$, $\mathbb Z :=\{0,\pm 1, \pm 2, \ldots \}$,
$\mathbb T =\mathbb R$ or $\mathbb Z$,  $\mathbb T _{+}=\{t\in
\mathbb T |\quad t \ge 0 \}$ and $\mathbb T_{-}=\{t \in \mathbb T|
\quad t \le 0 \}$. Let $(Y ,\mathbb T, \sigma )$ be an autonomous
two-sided dynamical system on $Y $ and $E$ be a real or complex
Banach space with the norm $\vert \cdot \vert $.

\begin{definition}\label{def2.1}(Cocycle on the state space $E$ with the base
$(Y ,\mathbb T ,\sigma)$). The triplet $\langle E, \phi ,(Y ,
\mathbb T, \sigma )\rangle (\mbox{or  briefly}\quad \phi) $ is
said to be a cocycle (see, for example, \cite{Che_2015} and
\cite{Sel_71}) on the state space $ E$ with the base $(Y,\mathbb
T,\sigma )$ if the mapping $\phi : \mathbb T_{+} \times  Y \times
E \to E $ satisfies the following conditions:
\begin{enumerate}
\item $\phi (0,y,u)=u $ for all $u\in E$ and $y\in Y$; \item $\phi
(t+\tau ,y ,u)=\phi (t,\phi (\tau ,u,y),\sigma(\tau,y))$ for all $
t, \tau \in \mathbb T _{+},u \in E$ and $ y \in Y$; \item the
mapping $\phi $ is continuous.
\end{enumerate}
\end{definition}

\begin{definition}\label{def2.2} (Skew-product dynamical system). Let $ \langle E ,
\phi,(Y,\mathbb T,\sigma)\rangle $ be a cocycle on $ E, X:=E\times
Y$ and $\pi $ be a mapping from $ \mathbb T _{+} \times X $ to $X$
defined by equality $\pi =(\phi ,\sigma)$, i.e., $\pi
(t,(u,y))=(\phi (t,\omega,u),\sigma(t,y))$ for all $ t\in \mathbb
T _{+}$ and $(u,y)\in E\times Y$. The triplet $ (X,\mathbb T _{+},
\pi)$ is an autonomous dynamical system and it is called
\cite{Sel_71} a skew-product dynamical system.
\end{definition}

\begin{definition}\label{def2.3} (Nonautonomous dynamical system.) Let
$ \mathbb T_{1}\subseteq \mathbb T _{2} $ be two subsemigroup of
the group $\mathbb T, (X,\mathbb T _{1},\pi ) $ and $(Y ,\mathbb T
_{2}, \sigma )$ be two autonomous dynamical systems and $h: X \to
Y$ be a homomorphism from $(X,\mathbb T _{1},\pi )$ to $(Y ,
\mathbb T _{2},\sigma)$ (i.e., $h(\pi(t,x))=\sigma(t,h(x)) $ for
all $t\in \mathbb T _{1} $, $ x \in X $ and $h $ is continuous),
then the triplet $\langle (X,\mathbb T _{1},\pi ),$ $ (Y ,$
$\mathbb T _{2},$ $ \sigma ),h \rangle $ is called (see
\cite{bro75} and \cite{Che_2015}) a nonautonomous dynamical
system.
\end{definition}

\begin{example} \label{ex2.4} (The nonautonomous dynamical system generated by cocycle $\phi$.)
Let $\langle E, \phi ,(Y ,\mathbb T,\sigma)\rangle $ be a cocycle,
$(X,\mathbb T _{+},\pi ) $ be a skew-product dynamical system
($X=E\times Y, \pi =(\phi,\sigma)$) and $h= pr _{2}: X \to Y ,$
then the triplet $\langle (X,\mathbb T _{+},\pi ),$ $(Y ,\mathbb
T,\sigma ),h \rangle $ is a nonautonomous dynamical system.
\end{example}

\subsection{Some general facts about nonautonomous dynamical
systems.}

In this subsection we give some general facts about nonautonomous
dynamical systems without proofs. The more details and the proofs
the readers can find in \cite{Che_2001} (see also
\cite[Ch.IX]{Che_2015}).

\begin{definition}\label{def3.2}  Let $(X,h,Y)$ be a fiber
space, i.e., $X$ and $Y$ be two metric spaces and $ h: X \to Y$ be
a homomorphism from $X$ into $Y$. The subset $ M \subseteq X$ is
said to be conditionally precompact
\cite{Che_2001},\cite[Ch.IX]{Che_2015}, if the preimage $h^{-1}(Y
')\bigcap M $ of every precompact subset $ Y '\subseteq Y$ is a
precompact subset of $X$. In particularly $M_{y}=h^{-1}(y)\bigcap
M$ is a precompact subset of $X_{y}$ for every $y\in Y $. The set
$ M $ is called conditionally compact if it is closed and
conditionally precompact.
\end{definition}

\begin{definition}\label{def3.11} Let
$\langle E,\phi, (Y,\mathbb T,\sigma)\rangle  \quad (respectively,
(X,\mathbb T_{+},\pi))$ be a cocycle (respectively, onesided
dynamical system). The continuous mapping $\nu :\mathbb T \to E $
(respectively, $\gamma :\mathbb T \to X$) is called an entire
trajectory of cocycle $\phi $ (respectively, of dynamical system
$(X,\mathbb T_{+},\pi)$) passing through the point $(u,y)\in
E\times Y$ (respectively, $x\in X$) for $t=0$ if $ \phi (t,\nu
(s),\sigma(s,y))=\nu (t+s) \ \mbox{and} \ \nu (0)=u $
(respectively, $\pi (t,\gamma (s))=\gamma (t+s)\ \mbox{and} \
\gamma(0)=x$) for all $ t\in \mathbb T_{+} $and $s\in \mathbb T.$
\end{definition}

Denote by  $\Phi_{x}$ the family of all entire trajectories of
$(X,\mathbb T_{+},\pi)$ passing through the point $x\in X$ at the
initial moment $t=0$ and $\Phi:=\bigcup \{\Phi_{x}:\ x\in X\}$.

\subsection{Bohr/Levitan almost periodic, almost automorphic,
recurrent and Poisson stable motions}

\begin{definition}\label{def4.2}
A number $\tau \in \mathbb S$ is called an $\varepsilon >0$ shift
of $x$ (respectively, almost period of $x$), if $\rho
(x\tau,x)<\varepsilon $ (respectively, $\rho (x(\tau
+t),xt)<\varepsilon$ for all $t\in \mathbb S$).
\end{definition}

\begin{definition}\label{def14.3}
A point $x \in X $ is called almost recurrent (respectively, Bohr
almost periodic), if for any $\varepsilon >0$ there exists a
positive number $l$ such that at any segment of length $l$ there
is an $\varepsilon$ shift (respectively, almost period) of point
$x\in X$.
\end{definition}

\begin{definition}\label{def4.4}
If the point $x\in X$ is almost recurrent and the set
$H(x):=\overline{\{xt\ \vert \ t\in \mathbb T\}}$ is compact, then
$x$ is called recurrent.
\end{definition}

Denote by $\mathfrak N_{x}:=\{\{t_n\}:\ \{t_n\}\subset \mathbb T \
\mbox{such that}\ \{\pi(t_n,x)\}\to x\ \mbox{as}\ n\to \infty\}$.

\begin{definition}
A point $x\in X$ of the dynamical system $(X,\mathbb T, \pi)$ is
called Levitan almost periodic \cite{Lev-Zhi}, if there exists a
dynamical system $(Y,\mathbb T,\sigma)$ and a Bohr almost periodic
point $y\in Y$ such that $\mathfrak N_{y}\subseteq \mathfrak
N_{x}.$
\end{definition}

\begin{definition} A point $x\in X$ is called stable in the sense
of Lagrange $(st.$L$)$, if its trajectory
$\Sigma_{x}:=\Phi\{\pi(t,x)\ :\ t\in \mathbb T\}$ is relatively
compact.
\end{definition}

\begin{definition} A point $x\in X$ is called almost automorphic
in the dynamical system $(X,\mathbb T,\pi),$ if the following
conditions hold: \begin{enumerate}\item $x$ is st.$L$; \item the
point $x\in X$ is Levitan almost periodic.
\end{enumerate}
\end{definition}

\begin{definition}\label{defPC1} A point $x_0\in X$ is called
\cite{Sch85,sib}
\begin{enumerate}
\item[-] pseudo recurrent if for any $\varepsilon >0$,
$t_0\in\mathbb T$ and $p\in \Sigma_{x_0}$ there exist numbers
$L=L(\varepsilon,t_0)>0$ and $\tau =\tau(\varepsilon,t_0,p)\in
[t_0,t_0+L]$ such that $\tau \in \mathfrak T (p,\varepsilon))$;
\item[-] pseudo periodic (or uniformly Poisson stable) if for any
$\varepsilon >0$, $t_0\in\mathbb T$ there exists a number  $\tau
=\tau(\varepsilon,t_0)>t_0$ such that $\tau \in \mathfrak T
(p,\varepsilon))$ for any $p\in \Sigma_{x_0}$; \item[-] Poisson
stable in the positive (respectively, negative) direction if for
any $\varepsilon >0$ and $l>0$ (respectively, $l<0$) there exists
a number $\tau >l$ (respectively, $\tau < l$) such that
$\rho(\pi(\tau,x_0),x_0)<\varepsilon.$ The point $x_0\in X$ is
called Poisson stable if it is stable (in the sense of Poisson) in
the both directions.
\end{enumerate}
\end{definition}

\begin{remark}\label{remPR_1} 1. Every pseudo periodic point is
pseudo recurrent.

2. If $x\in X$ is pseudo recurrent, then
\begin{enumerate}
\item[-] it is Poisson stable; \item[-] every point $p\in H(x)$ is
pseudo recurrent; \item[-] there exist pseudo recurrent points for
which the set $H(x_0)$ is compact but not minimal
\cite[ChV]{scher72}; \item[-] there exist pseudo recurrent points
which are not almost automorphic (respectively, pseudo periodic)
\cite[ChV]{scher72}.
\end{enumerate}
\end{remark}

\subsection{B. A. Shcherbakov's principle of comparability of
motions by their character of recurrence}

In this subsection we will present some notions and results stated
and proved by B. A. Shcherbakov \cite{scher72}-\cite{Sch85}.

Let $(X,\mathbb T_1,\pi)$ and $(Y,\mathbb T_2,\sigma)$ be two
dynamical systems.

\begin{definition}\label{defShc1}
A point $x\in X$ is said to be comparable with $y\in Y$ by the
character of recurrence, if for all $\varepsilon
>0$ there exists a $\delta =\delta(\varepsilon)>0$ such that every
$\delta$--shift of $y$ is an $\varepsilon$--shift for $x$, i.e.,
$d(\sigma(\tau,y),y)<\delta$ implies
$\rho(\pi(\tau,x),x)<\varepsilon$, where $d$ (respectively, $\rho$)
is the distance on $Y$ (respectively, on $X$).
\end{definition}

\begin{theorem}\label{thShc2.1} Let $x$ be comparable with $y\in Y$. If
the point $y\in Y$ is stationary (respectively, $\tau$--periodic,
Levitan almost periodic, almost recurrent, Poisson stable), then the
point $x\in X$ is so.
\end{theorem}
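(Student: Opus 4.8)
The plan is to prove Theorem~\ref{thShc2.1} by exploiting the comparability hypothesis to transport, for each recurrence notion, the relevant $\varepsilon$-$\delta$ structure from $y$ to $x$. The underlying mechanism in every case is the same: Definition~\ref{defShc1} says that for each $\varepsilon>0$ there is a $\delta=\delta(\varepsilon)>0$ so that every $\delta$-shift $\tau$ of $y$ is automatically an $\varepsilon$-shift of $x$. Thus any statement about $y$ asserting the existence of $\delta$-shifts with a prescribed distribution in time will immediately yield $\varepsilon$-shifts of $x$ with the same distribution. I would organize the proof as a case analysis over the listed recurrence types, treating them from the simplest to the most delicate, with the Levitan almost periodic case being the structurally heaviest.

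First I would dispatch the stationary, $\tau$-periodic, and Poisson stable cases, since these follow almost directly. If $y$ is stationary, then every $\tau\in\mathbb T_2$ is a $\delta$-shift of $y$ for every $\delta$, hence every such $\tau$ is an $\varepsilon$-shift of $x$ for every $\varepsilon>0$, forcing $x$ stationary. If $y$ is $\tau$-periodic, then $\tau$ (and its multiples) are exact $0$-shifts, hence $\varepsilon$-shifts of $x$ for all $\varepsilon$, and one concludes $x$ is $\tau$-periodic by letting $\varepsilon\to 0$. For Poisson stability: given $\varepsilon>0$ and any $l$, choose $\delta=\delta(\varepsilon)$; Poisson stability of $y$ produces a $\delta$-shift $\tau>l$ (resp. $\tau<l$), which by comparability is an $\varepsilon$-shift of $x$, so $\rho(\pi(\tau,x),x)<\varepsilon$, establishing Poisson stability of $x$ in the required direction.

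Next I would handle almost recurrence, which uses the full force of Definition~\ref{def14.3}. Fix $\varepsilon>0$ and take the associated $\delta=\delta(\varepsilon)$. Since $y$ is almost recurrent, there is a length $l=l(\delta)>0$ such that every interval of length $l$ in $\mathbb T_2$ contains a $\delta$-shift of $y$. By comparability, each such $\delta$-shift is an $\varepsilon$-shift of $x$; hence every interval of length $l$ contains an $\varepsilon$-shift of $x$, which is exactly almost recurrence of $x$. I should note here the compatibility of the time semigroups $\mathbb T_1\subseteq\mathbb T_2$ (cf.\ Definition~\ref{def2.3}), so that a shift $\tau$ valid for $y$ is an admissible time for $x$; I would remark that the comparability definition implicitly presupposes a common set of admissible $\tau$, which is where one must be slightly careful.

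The main obstacle will be the Levitan almost periodic case, because Levitan almost periodicity is defined not via the existence of $\varepsilon$-almost periods of $x$ itself but through an auxiliary Bohr almost periodic system and the inclusion of convergence-net families $\mathfrak N_{y'}\subseteq\mathfrak N_{x}$. The clean route is to reduce this to a statement about sequences: if $y$ is Levitan almost periodic via some Bohr almost periodic $z$ with $\mathfrak N_{z}\subseteq\mathfrak N_{y}$, I want to show $\mathfrak N_{z}\subseteq\mathfrak N_{x}$, which would exhibit $x$ as Levitan almost periodic with the same companion $z$. The key lemma to establish is that comparability transfers the net-convergence property: if $\{t_n\}\in\mathfrak N_{y}$, i.e.\ $\sigma(t_n,y)\to y$, then for each $\varepsilon$ the tail of $\{t_n\}$ consists of $\delta(\varepsilon)$-shifts of $y$, hence of $\varepsilon$-shifts of $x$, giving $\pi(t_n,x)\to x$ and so $\{t_n\}\in\mathfrak N_{x}$. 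Chaining $\mathfrak N_{z}\subseteq\mathfrak N_{y}\subseteq\mathfrak N_{x}$ then finishes the case. I expect the delicate point to be verifying that the $\varepsilon$-$\delta$ comparability, which is a priori a statement about single shifts, correctly yields the sequential convergence $\pi(t_n,x)\to x$ uniformly enough along the sequence; this requires choosing, for each $\varepsilon$, an index beyond which $\sigma(t_n,y)$ lies within $\delta(\varepsilon)$ of $y$, and then invoking comparability termwise.
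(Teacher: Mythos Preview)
The paper does not supply its own proof of Theorem~\ref{thShc2.1}; the result is attributed to Shcherbakov and merely cited (see the opening line of Subsection~2.4 and the references \cite{scher72}--\cite{Sch85}). Your argument is correct and is precisely the standard direct verification: the comparability relation $\mathfrak T(y,\delta)\subseteq\mathfrak T(x,\varepsilon)$ (for $\delta=\delta(\varepsilon)$) immediately transports each of the listed recurrence properties from $y$ to $x$, and for the Levitan case your observation that comparability implies $\mathfrak N_{y}\subseteq\mathfrak N_{x}$, whence $\mathfrak N_{z}\subseteq\mathfrak N_{y}\subseteq\mathfrak N_{x}$, is exactly the right mechanism. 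One minor remark: your aside about the time semigroups $\mathbb T_{1}\subseteq\mathbb T_{2}$ is not really needed here, since Definition~\ref{defShc1} already presupposes that the shifts $\tau$ act in both systems; this is a standing convention in Shcherbakov's framework rather than a gap to be filled.
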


\begin{definition}
A point $x\in X$ is called \textit{uniformly comparable with $y\in
Y$ by character of recurrence}, if for all $\varepsilon >0$ there
exists a $\delta =\delta(\varepsilon)>0$ such that every
$\delta$--shift of $\sigma(t,y)$ is an $\varepsilon$--shift for
$\pi(t,x)$ for all $t\in\mathbb T$, i.e.,
$d(\sigma(t+\tau,y),\sigma(t,y))<\delta$ implies
$\rho(\pi(t+\tau,x),x)<\varepsilon$ for all $t\in \mathbb T$ (or
equivalently, $d(\sigma(t_1,y),\sigma(t_2,y))<\delta$ implies
$\rho(\pi(t_1,x),\pi(t_2,x))<\varepsilon$ for all $t_1,t_2\in
\mathbb T$).
\end{definition}

Denote by $\mathfrak{M}_{x}:=\{\{t_{n}\}\subset\mathbb{R} \ :\
\mbox{such that}\ \{\pi(t_{n},x)\}$ converges $ \}$.

\begin{definition} A point $x\in X$ is said \cite{Che_1977},\cite[ChII]{Che_2009} to be strongly
comparable by character of recurrence with the point $y\in Y$, if
$\mathfrak M_{y}\subseteq \mathfrak M_{x}$.
\end{definition}

\begin{definition} A point $y\in Y$ is said to be:
\begin{enumerate}
\item  stable in the sense of Lagrange in the positive direction
(respectively, stable in the sense of Lagrange) if the set
$H^{+}(y):=\overline{\{\sigma(t,y)|\ t\in\mathbb T_{+}\}}$
(respectively, $H(y):=\overline{\{\sigma(t,y)|\ t\in\mathbb T\}}$)
is compact; \item Poisson stable in the positive direction if
$x\in \omega_{x}$, where
$$
\omega_{x}:=\bigcap_{t\ge 0}\overline{\bigcup_{\tau \ge
t}\pi(\tau,x)}\ .
$$
\end{enumerate}
\end{definition}

\begin{theorem}\label{thShc5} Let $X$ be a complete metric space.
If the point $x$ uniformly comparable by character of recurrence
with $y$, then $\mathfrak M_{y}\subseteq \mathfrak M_{x}$.
\end{theorem}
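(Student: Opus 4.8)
The plan is to exploit the completeness of $X$ together with the Cauchy-transfer property built into the definition of uniform comparability. Recall that a sequence $\{t_n\}$ lies in $\mathfrak M_y$ precisely when $\{\sigma(t_n,y)\}$ converges, and we must show that this forces $\{\pi(t_n,x)\}$ to converge as well, so that $\{t_n\}\in\mathfrak M_x$.

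First I would fix an arbitrary $\{t_n\}\in\mathfrak M_y$ and observe that, since $\{\sigma(t_n,y)\}$ converges in the metric space $Y$, it is in particular a Cauchy sequence. Then, given any $\varepsilon>0$, I would invoke the uniform comparability of $x$ with $y$ to obtain $\delta=\delta(\varepsilon)>0$ with the property that $d(\sigma(t_1,y),\sigma(t_2,y))<\delta$ implies $\rho(\pi(t_1,x),\pi(t_2,x))<\varepsilon$ for all $t_1,t_2\in\mathbb T$; this is exactly the equivalent reformulation of uniform comparability recorded in the definition. Using the Cauchy property of $\{\sigma(t_n,y)\}$, I would choose $N$ so that $d(\sigma(t_n,y),\sigma(t_m,y))<\delta$ for all $n,m\ge N$, and then apply the implication with $t_1=t_n$ and $t_2=t_m$ to obtain $\rho(\pi(t_n,x),\pi(t_m,x))<\varepsilon$ for all $n,m\ge N$.

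This shows that $\{\pi(t_n,x)\}$ is a Cauchy sequence in $X$. Since $X$ is assumed complete, $\{\pi(t_n,x)\}$ converges, and hence $\{t_n\}\in\mathfrak M_x$ by definition. As $\{t_n\}\in\mathfrak M_y$ was arbitrary, we conclude $\mathfrak M_y\subseteq\mathfrak M_x$.

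The argument is essentially routine; the only point requiring care is the correct reading of the definition of uniform comparability, where the reformulation in terms of arbitrary pairs $t_1,t_2$ (rather than a base time $t$ and a shift $t+\tau$) is precisely the transfer-of-Cauchyness statement needed. The completeness of $X$ is indispensable and is exactly the hypothesis singled out in the theorem: without it one could only conclude that $\{\pi(t_n,x)\}$ is Cauchy, not that it converges. I do not anticipate a genuine obstacle beyond this bookkeeping.
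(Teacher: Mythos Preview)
Your argument is correct and is precisely the standard Cauchy--transfer proof: convergence of $\{\sigma(t_n,y)\}$ gives Cauchyness, uniform comparability (in its $t_1,t_2$ reformulation) transfers Cauchyness to $\{\pi(t_n,x)\}$, and completeness of $X$ yields convergence. The paper itself does not supply a proof of this theorem---it is quoted as one of Shcherbakov's results (the subsection announces that the results there are ``stated and proved by B.~A.~Shcherbakov'' in \cite{scher72}--\cite{Sch85})---so there is nothing further to compare; your write-up matches the classical argument found in those references.
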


\begin{theorem}\label{thShc6} Let $y$ be stable in the sense of
Lagrange. The inclusion $\mathfrak M_{y}\subseteq \mathfrak M_{x}$
takes place, if and only if the point $x$ is stable in the sense of
Lagrange and the point $x$ uniformly comparable by character of
recurrence with $y$.
\end{theorem}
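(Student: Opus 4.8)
The plan is to treat the two implications separately, since the ``if'' direction is essentially an adaptation of Theorem \ref{thShc5} to the present hypotheses, while the ``only if'' direction carries the real content.

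For the ``if'' direction I would suppose that $x$ is st.$L$ and uniformly comparable by character of recurrence with $y$, and take an arbitrary $\{t_n\}\in\mathfrak M_y$, so that $\{\sigma(t_n,y)\}$ converges and is in particular Cauchy. Given $\varepsilon>0$, I choose the $\delta=\delta(\varepsilon)$ furnished by uniform comparability; for $m,n$ large one has $d(\sigma(t_m,y),\sigma(t_n,y))<\delta$, whence $\rho(\pi(t_m,x),\pi(t_n,x))<\varepsilon$. Thus $\{\pi(t_n,x)\}$ is Cauchy. Because $x$ is st.$L$, the closure $\overline{\Sigma_x}$ is compact, hence complete, so this Cauchy sequence converges in $\overline{\Sigma_x}\subseteq X$; that is, $\{t_n\}\in\mathfrak M_x$. (The role of the st.$L$ hypothesis here is precisely to supply the completeness that Theorem \ref{thShc5} obtains from its assumption on $X$.)

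For the ``only if'' direction I would assume $\mathfrak M_y\subseteq\mathfrak M_x$ with $y$ stable in the sense of Lagrange, and establish the two required conclusions in turn. To see that $x$ is st.$L$, I take any sequence $\{t_n\}$; since $H(y)$ is compact there is a subsequence with $\{\sigma(t_{n_k},y)\}$ convergent, i.e.\ $\{t_{n_k}\}\in\mathfrak M_y\subseteq\mathfrak M_x$, so $\{\pi(t_{n_k},x)\}$ converges in $X$. Hence every sequence in $\Sigma_x$ has a convergent subsequence, and a routine argument (approximating points of $\overline{\Sigma_x}$ by points of $\Sigma_x$) shows this forces $\overline{\Sigma_x}$ to be sequentially compact, so $x$ is st.$L$.

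The main work, and the step I expect to be the principal obstacle, is proving uniform comparability. I would argue by contradiction: if it fails, there is an $\varepsilon_0>0$ and times $t_1^{(n)},t_2^{(n)}$ with $d(\sigma(t_1^{(n)},y),\sigma(t_2^{(n)},y))\to 0$ yet $\rho(\pi(t_1^{(n)},x),\pi(t_2^{(n)},x))\ge\varepsilon_0$. Using compactness of $H(y)$, I pass to a subsequence with $\sigma(t_1^{(n)},y)\to p$; then $\sigma(t_2^{(n)},y)\to p$ as well. The crucial device is to interleave the two time-sequences into a single sequence $\{s_m\}$ alternating $t_1^{(n)}$ and $t_2^{(n)}$; then $\sigma(s_m,y)\to p$, so $\{s_m\}\in\mathfrak M_y\subseteq\mathfrak M_x$ and $\{\pi(s_m,x)\}$ converges to a single limit $q$. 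Consequently both $\pi(t_1^{(n)},x)\to q$ and $\pi(t_2^{(n)},x)\to q$, whence $\rho(\pi(t_1^{(n)},x),\pi(t_2^{(n)},x))\to 0$, contradicting the lower bound $\varepsilon_0$. The subtlety to get right is that membership in $\mathfrak M$ demands genuine convergence of the whole interleaved sequence and not merely of its two subsequences; it is exactly this that forces the two limits to coincide and yields the contradiction.
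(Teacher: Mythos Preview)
Your argument is correct in both directions. The ``if'' direction is the standard Cauchy--completeness argument (with Lagrange stability replacing the completeness hypothesis of Theorem~\ref{thShc5}), and for the ``only if'' direction your interleaving trick is exactly the right device: it is the fact that $\{s_m\}\in\mathfrak M_x$ forces a \emph{single} limit for the whole interleaved sequence that delivers the contradiction. The auxiliary step showing $x$ is st.$L$ is also fine; the diagonal approximation you mention is all that is needed to pass from ``every sequence in $\Sigma_x$ has a subsequence convergent in $X$'' to compactness of $\overline{\Sigma_x}$.

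As for comparison with the paper: there is nothing to compare against. Theorem~\ref{thShc6} is stated in the paper without proof, as one of several results attributed to Shcherbakov (see the opening line of that subsection and the references \cite{scher72}--\cite{Sch85}). Your write-up is essentially the classical proof of Shcherbakov's characterization, and could serve as a self-contained replacement for the bare citation.
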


\begin{theorem}\label{tS2} Let $X$ and $Y$ be two complete metric spaces,
the point $x$ be uniformly comparable with $y\in Y$ by the character
of recurrence. If the point $y\in Y$ is recurrent (respectively,
almost periodic, almost automorphic, uniformly Poisson stable), then
so is the point $x\in X$.
\end{theorem}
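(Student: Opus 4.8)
My plan is to separate the two independent mechanisms hidden in the hypothesis. Ordinary comparability already transfers the \emph{qualitative character} of recurrence (almost recurrence, Levitan almost periodicity), while the \emph{uniformity} built into the comparability hypothesis is what transfers the uniform recurrence properties (Bohr almost periodicity, uniform Poisson stability) and, through the inclusion of the convergence-sequence sets, the Lagrange stability of $x$.

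First I would note that uniform comparability implies ordinary comparability: putting $t=0$ in the defining implication $d(\sigma(t+\tau,y),\sigma(t,y))<\delta\Rightarrow\rho(\pi(t+\tau,x),\pi(t,x))<\varepsilon$ yields exactly $d(\sigma(\tau,y),y)<\delta\Rightarrow\rho(\pi(\tau,x),x)<\varepsilon$. Hence Theorem \ref{thShc2.1} is available and, from the corresponding property of $y$, gives that $x$ is almost recurrent when $y$ is recurrent and that $x$ is Levitan almost periodic when $y$ is almost automorphic.

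Next I would record the key transfer of almost periods. Fix $\varepsilon>0$ and take $\delta=\delta(\varepsilon)$ from uniform comparability. If $\tau$ is a $\delta$-almost period of $y$, that is $d(\sigma(t+\tau,y),\sigma(t,y))<\delta$ for all $t$, then the defining implication gives $\rho(\pi(t+\tau,x),\pi(t,x))<\varepsilon$ for all $t$, so $\tau$ is an $\varepsilon$-almost period of $x$. From this the two uniform cases are immediate. If $y$ is Bohr almost periodic, its $\delta$-almost periods meet every segment of length $l=l(\delta)$, hence so do the $\varepsilon$-almost periods of $x$, and $x$ is Bohr almost periodic by Definition \ref{def14.3}. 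If $y$ is uniformly Poisson stable, then for every $t_0$ there is a $\delta$-almost period $\tau>t_0$ of $y$, which is an $\varepsilon$-almost period of $x$; as $\varepsilon$ and $t_0$ were arbitrary, $x$ is uniformly Poisson stable by Definition \ref{defPC1} (the passage from orbit points to the closure being a routine limiting argument using $\varepsilon/2$-slack).

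It remains, for the recurrent and almost automorphic cases, to upgrade the qualitative conclusions of the first step to full Lagrange stability of $x$. In both cases $y$ is stable in the sense of Lagrange (its hull $H(y)$ is compact: by Definition \ref{def4.4} for a recurrent $y$, by the st.$L$ requirement for an almost automorphic $y$). Since $X$ is complete and $x$ is uniformly comparable with $y$, Theorem \ref{thShc5} gives $\mathfrak M_{y}\subseteq\mathfrak M_{x}$; feeding this together with the Lagrange stability of $y$ into Theorem \ref{thShc6} forces $x$ to be stable in the sense of Lagrange. Combining with the first step, $x$ is almost recurrent with $H(x)$ compact, hence recurrent, and $x$ is Levitan almost periodic and st.$L$, hence almost automorphic. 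The only step carrying real content beyond bookkeeping is this last one: one must check that $y$ is genuinely Lagrange stable in each case and apply the equivalence of Theorem \ref{thShc6} in the direction that converts the sequence-set inclusion $\mathfrak M_{y}\subseteq\mathfrak M_{x}$ into compactness of $H(x)$; everything else is a direct unwinding of the definitions through the uniform implication.
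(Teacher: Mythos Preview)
Your argument is correct. The paper does not supply its own proof of this theorem: the whole subsection is an expository collection of Shcherbakov's results, and Theorem~\ref{tS2} is stated there without proof, so there is nothing to compare against. Your reconstruction---reducing the recurrent and almost automorphic cases to Theorems~\ref{thShc2.1}, \ref{thShc5}, \ref{thShc6} to obtain Lagrange stability of $x$, and handling the Bohr almost periodic and uniformly Poisson stable cases by directly transferring almost periods through the uniform implication---is exactly the standard route. The parenthetical remark about ``passing from orbit points to the closure'' in the uniformly Poisson stable case is unnecessary, since the definitions in the paper (Definitions~\ref{defPC1} and~\ref{defP1}) are stated for orbit points only; otherwise every step is clean.
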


\subsection{Monotone Nonautonomous Dynamical Systems}

Let $\mathbb R_{+}^{d}:=\{x\in \mathbb R^{d}:\ $ such that $x_i\ge
0$ ($x:=(x_1,\ldots,x_n)$) for any $i=1,2,\ldots,d\}$ be the cone
of nonnegative vectors of $\mathbb R^d$. By $\mathbb R_{+}^{d}$ on
the space $\mathbb R^{d}$ is defined a partial order. Namely:
$u\le v$ if $v-u\in \mathbb R_{+}^{d}$. Let $K \subset \mathbb
R^{d}$ be a compact subset of $\mathbb R^{d}$, and for each $1\le
i \le d,$ define $\alpha_{i}(K):=\min\{x_i|\ x=(x_1,\ldots,x_d)\in
K\}$ and $\beta_{i}(K):=\max\{x_i|\ x=(x_1,\ldots,x_d)\in K\}$.
Then $\alpha (K):=(\alpha_1(K),\ldots,\alpha_d(K))$ and $\beta
(K):=(\beta_1(K),\ldots,\beta_d(K))$ are the greatest lower bound
(\emph{infimum}) and least upper bound (\emph{supremum}) of with
respect to the order on $\mathbb{R}^{d}$, respectively.

\begin{definition}\label{defMSP} Let $\langle \mathbb R^{d},\varphi, (Y,\mathbb T,\sigma)\rangle
$ be a cocycle and $\langle (X,\mathbb T_{+},\pi), (Y,\mathbb
T,\sigma),h\rangle$ be a nonautonomous dynamical system associated
by cocycle $\varphi$ (i.e., $X:=\mathbb R^d\times Y$, $\pi
=(\varphi,\sigma)$ and $h:=pr_2 :X\to Y$). The cocycle $\varphi$
is said to be monotone if $u_1\le u_2$ implies
$\varphi(t,u_1,y)\le \varphi(t,u_2,y)$ for any $t>0$ and $y\in Y$.
\end{definition}

Recall that a forward orbit $\{\pi(t,x_0)\ t\ge 0\}$ of
non-autonomous dynamical systems $\langle (X,\mathbb T_{+},\pi),
(Y,\mathbb T,\sigma),h\rangle$ is said to be uniformly stable if
for any $\varepsilon > 0$, there is a $\delta
=\delta(\varepsilon)>0$ such that
$\rho(\pi(t_0,x_0),\pi(t_0,x_0))<\delta$ implies
$d(\pi(t,x_0),\pi(t,x_0))<\varepsilon $ for every $t\ge t_0$.

Below we will use the following assumptions:
\begin{enumerate}
\item[(C1)] For every compact subset $K\subset X$ and $y\in Y$ the
set $K_{y}:=h^{-1}(y)\bigcap K$ has both the greatest lower bound
(g.l.b.) $\alpha_{y}(K)$ and the least upper bound (l.u.b.)
$\beta_{y}(K)$. \item[(C2)] For every $x\in X$, the
semi-trajectory $\Sigma^{+}_{x}:=\{\pi(t,x):\ t\ge 0\}$ is
conditionally precompact and its $\omega$-limit set $\omega_{x}$
is positively uniformly stable.

\item[(C3)] The non-autonomous dynamical system $\langle
(X,\mathbb T_{+},\pi), (Y,\mathbb T,\sigma),h\rangle $ generated
by cocycle $\varphi$ is monotone.
\end{enumerate}


\begin{lemma}\label{lAPS2}\cite{CL_2017} Assume that $(C1)$--$(C3)$ hold, $x_0\in X$ such that
$\omega_{x_0}$ is positively uniformly stable. Let $K:=\omega_{x_0}$
be fixed and $y_0:=h(x_0)$. Then if $q\in \omega_{q}\subseteq
\omega_{y_0}$, $\alpha_{q}:=\alpha_{q}(K)$,
$K^{1}:=\omega_{\alpha_{q}}$, then the set
$K_{q}^{1}:=\omega_{\alpha_{q}}\bigcap X_{q}$ (respectively,
$\omega_{\beta_{q}}\bigcap X_{q}$) consists a single point
$\gamma_{q}$ (respectively, $\delta_{q}$), i.e.,
$K_{q}^{1}=\{\gamma_{q}\}$ (respectively, $\{\delta_{q}\}$).
\end{lemma}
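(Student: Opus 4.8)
The plan is to exploit monotonicity together with the positive uniform stability of the $\omega$-limit set to squeeze the fiber $K_q^1$ to a single point. First I would set up the comparison structure: since $q\in\omega_q\subseteq\omega_{y_0}$, I consider $\alpha_q=\alpha_q(K)$, which by assumption (C1) is the greatest lower bound of the fiber $K_q=h^{-1}(q)\cap K$; by definition $\alpha_q\le u$ for every $u\in K_q$. Using monotonicity (C3), I would propagate this inequality along the cocycle: for any entire trajectory lying in $K=\omega_{x_0}$ and passing through a point of $K_q$, the order $\alpha_q\le u$ is preserved under $\varphi(t,\cdot,q)$, so the forward images of $\alpha_q$ stay below the forward images of the points of $K_q$. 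Passing to $\omega$-limits, this shows $\omega_{\alpha_q}$ sits below $K$ on the relevant fibers, which gives one inclusion of the sandwich.

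The key step is to show the fiber $K_q^1=\omega_{\alpha_q}\cap X_q$ cannot contain two distinct points. Suppose $p_1,p_2\in K_q^1$ with $p_1\le p_2$ (comparability on the fiber coming from the g.l.b./l.u.b. structure). The idea is that both are $\omega$-limit points of the \emph{same} semitrajectory emanating from $\alpha_q$, reached along two sequences of times $t_n\to+\infty$. Because $q$ is recurrent in its own right ($q\in\omega_q$), I can select these times so that $\sigma(t_n,\alpha_q)$ returns near $\alpha_q$ — i.e., the base point comes back — which lets me compare $\varphi(t_n,\cdot,\cdot)$ values starting from comparable data. Positive uniform stability of $\omega_{x_0}$ (and hence along these trajectories, via (C2)) then forces $\rho(p_1,p_2)$ to be smaller than any $\varepsilon>0$: the stability estimate bounds the distance between the two forward evolutions by the initial discrepancy, which I can arrange to vanish using the recurrence of $q$ and the fact that both orbits originate from the single point $\alpha_q$. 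This yields $p_1=p_2$, so $K_q^1=\{\gamma_q\}$.

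The main obstacle I anticipate is the interplay between monotonicity and uniform stability in the \emph{noncompact-base} setting: uniform stability gives control of distances forward in time only from a common initial moment, whereas the two candidate limit points are extracted along different time sequences. The delicate point is to arrange, simultaneously, that (i) the base orbit $\sigma(t_n,q)$ returns close to $q$ (available from $q\in\omega_q$), and (ii) the two fiber evolutions started from $\alpha_q$ remain order-comparable and squeezed by stability. I would handle this by a standard diagonal/limit argument: pick a sequence realizing one limit point, extract a subsequence making the base orbit recurrent near $q$, and use the cocycle identity (Definition~\ref{def2.1}(2)) to rewrite $\varphi(t_n+s,\cdot,q)$ as an iterate, so that uniform stability applies on the shifted trajectory. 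The monotone structure guarantees the two limit points are ordered, and conditional precompactness (C2) guarantees the limits exist in $X_q$; stability collapses the order interval to a point. The argument for $\delta_q$ using $\beta_q$ is symmetric, replacing g.l.b.\ by l.u.b.\ and reversing the inequalities.
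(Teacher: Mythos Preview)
The paper does not supply a proof of this lemma; it is stated with attribution to \cite{CL_2017} and no argument appears in the present text, so there is no in-paper proof against which to compare your proposal.

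On the substance of your sketch, one step is unjustified. You write ``Suppose $p_1,p_2\in K_q^1$ with $p_1\le p_2$ (comparability on the fiber coming from the g.l.b./l.u.b.\ structure).'' Condition (C1) only guarantees that each compact fiber set possesses an infimum and a supremum in $\mathbb{R}^d$; it does \emph{not} imply that two arbitrary elements of $K_q^1$ are comparable in the partial order induced by $\mathbb{R}^d_+$. Fortunately the remainder of your uniqueness argument does not actually use $p_1\le p_2$: it rests on the facts that $p_1,p_2$ are both $\omega$-limit points of the \emph{single} forward orbit of $\alpha_q$, that $h(p_1)=h(p_2)=q\in\omega_q$, and that $\omega_{\alpha_q}$ is positively uniformly stable by (C2). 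So you should simply drop the unsupported order claim. The one-cover mechanism you then describe --- choose $T$ bringing the orbit $\delta$-close to $p_2$, apply uniform stability for all $t\ge 0$ via the cocycle identity, and run $t$ along a return-time sequence realizing $p_1$ while the base returns to $q$ --- is the standard route and is in the correct spirit; in a full write-up the delicate point is exactly the one you flag, namely extracting subsequences so that the fiber and base limits line up simultaneously over $q$.
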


\begin{theorem}\label{thM1} \cite{CL_2017} Assume that $(C1)$--$(C3)$ hold, $x_0\in X$ such that
$\omega_{x_0}$ is positively uniformly stable and $y_0:=h(x_0)$.
Then the following statements hold:
\begin{enumerate}
\item if $y_0\in \omega_{y_0}$, then the point $\gamma_{y_0}$
(respectively, $\beta_{y_0}$) is comparable by character of
recurrence with $y _0$ and \item
\begin{equation}\label{eqP0}
\lim\limits_{n\to
\infty}\rho(\pi(t,\alpha_{y_0}),\pi(t,\gamma_{y_0}))=0\ .\nonumber
\end{equation}
\end{enumerate}
\end{theorem}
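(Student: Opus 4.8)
The plan is to deduce both assertions from the single structural fact handed to us by Lemma~\ref{lAPS2}: taking $q=y_0$ (legitimate, since $y_0\in\omega_{y_0}$ is exactly the hypothesis of (1)) we have $\omega_{\alpha_{y_0}}\cap X_{y_0}=\{\gamma_{y_0}\}$. I will combine this with two soft facts about the set $\omega_{\alpha_{y_0}}$, namely that it is invariant and conditionally compact (as the $\omega$-limit set of a conditionally precompact semitrajectory) and that it is positively uniformly stable by (C2). Throughout I write $K:=\omega_{x_0}$, $\alpha_{y_0}:=\alpha_{y_0}(K)$, and record that $h(\gamma_{y_0})=y_0$ and $\gamma_{y_0}\in\omega_{\alpha_{y_0}}$; the statements for $\beta_{y_0}$ and $\delta_{y_0}$ follow by the order-reversed argument, which I will not repeat.

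For (1) I would argue by contradiction against Definition~\ref{defShc1}. Suppose $\gamma_{y_0}$ is not comparable with $y_0$: then there are $\varepsilon_0>0$ and shifts $\tau_n$ with $d(\sigma(\tau_n,y_0),y_0)\to 0$ while $\rho(\pi(\tau_n,\gamma_{y_0}),\gamma_{y_0})\ge\varepsilon_0$. Since $\gamma_{y_0}\in\omega_{\alpha_{y_0}}$ and $\omega_{\alpha_{y_0}}$ is invariant, each $\pi(\tau_n,\gamma_{y_0})$ remains in $\omega_{\alpha_{y_0}}$ and projects by $h$ to $\sigma(\tau_n,y_0)\to y_0$. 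As $\{\sigma(\tau_n,y_0)\}\cup\{y_0\}$ is compact and $\omega_{\alpha_{y_0}}$ is conditionally compact, the sequence $\{\pi(\tau_n,\gamma_{y_0})\}$ is precompact and every limit point $p$ lies in $\omega_{\alpha_{y_0}}\cap X_{y_0}=\{\gamma_{y_0}\}$, forcing $p=\gamma_{y_0}$. Hence $\rho(\pi(\tau_n,\gamma_{y_0}),\gamma_{y_0})\to 0$, contradicting $\ge\varepsilon_0$; so $\gamma_{y_0}$ is comparable with $y_0$, and Theorem~\ref{thShc2.1} then transfers the recurrence type of $y_0$. For shifts $\tau_n<0$ one reads $\pi(\tau_n,\gamma_{y_0})$ along an entire trajectory inside the compact invariant set $\omega_{\alpha_{y_0}}$, and the same extraction applies.

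For (2) the idea is to pin both orbits to the single fibre point at the return instants of $y_0$ and then let positive uniform stability propagate the closeness, thereby avoiding any asymptotic-phase correction. Fix $\varepsilon>0$ and let $\delta>0$ come from positive uniform stability of $\omega_{\alpha_{y_0}}$ in the form: $v\in\omega_{\alpha_{y_0}}$ and $\rho(v,w)<\delta$ imply $\rho(\pi(t,v),\pi(t,w))<\varepsilon$ for all $t\ge 0$. Since $y_0\in\omega_{y_0}$, I choose return times $r_n\to+\infty$ with $\sigma(r_n,y_0)\to y_0$. Exactly as in (1), both $\pi(r_n,\alpha_{y_0})$ and $\pi(r_n,\gamma_{y_0})$ have all their limit points in $\omega_{\alpha_{y_0}}\cap X_{y_0}=\{\gamma_{y_0}\}$, so both converge to $\gamma_{y_0}$; as they lie in the common fibre $X_{\sigma(r_n,y_0)}$ for each $n$, we get $\rho(\pi(r_n,\alpha_{y_0}),\pi(r_n,\gamma_{y_0}))<\delta$ for all large $n$. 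Applying uniform stability with $v=\pi(r_n,\gamma_{y_0})\in\omega_{\alpha_{y_0}}$ and $w=\pi(r_n,\alpha_{y_0})$ gives $\rho(\pi(s,\alpha_{y_0}),\pi(s,\gamma_{y_0}))<\varepsilon$ for every $s\ge r_n$; letting $r_n\to\infty$ and $\varepsilon\to 0$ yields precisely $\lim_{t\to+\infty}\rho(\pi(t,\alpha_{y_0}),\pi(t,\gamma_{y_0}))=0$.

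The main obstacle, and the place where the hypotheses must be used with care, is this last step: uniform stability compares orbits only forward in time and only at equal times, so one cannot afford to match $\alpha_{y_0}$ against $\gamma_{y_0}$ at shifted instants. The device that makes it work is that the return times $r_n$ carry the base back near $y_0$, where the fibre of $\omega_{\alpha_{y_0}}$ degenerates to the single point $\gamma_{y_0}$; this simultaneously traps both orbits in one fibre near one point, so uniform stability applies at equal times without any phase shift. I expect the only remaining technical points to be pinning down the precise form of positive uniform stability needed (one argument in $\omega_{\alpha_{y_0}}$, the other a nearby point of the conditionally precompact semitrajectory) and the treatment of negative shifts in (1) via entire trajectories in $\omega_{\alpha_{y_0}}$.
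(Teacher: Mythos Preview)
The paper does not give its own proof of this theorem: it is quoted verbatim from \cite{CL_2017}, so there is no in-paper argument to compare against. Your proposal is therefore the only proof on the table, and it is a sound one. The scheme---reduce via Lemma~\ref{lAPS2} to the fact that the fibre $\omega_{\alpha_{y_0}}\cap X_{y_0}$ is a singleton, deduce comparability by a conditional-compactness extraction over the returning base, and then propagate closeness forward by the positive uniform stability of $\omega_{\alpha_{y_0}}$ granted by (C2)---is exactly the standard route in this circle of ideas and is presumably close to what \cite{CL_2017} does.

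Two places deserve a sentence of tightening. First, for negative shifts in part~(1) you should make explicit that through $\gamma_{y_0}\in\omega_{\alpha_{y_0}}$ there passes a full trajectory with values in the invariant, conditionally compact set $\omega_{\alpha_{y_0}}$; comparability is then understood along this full trajectory, and your extraction argument applies unchanged because the trajectory values at times $\tau_n$ still lie in $\omega_{\alpha_{y_0}}$ over $\sigma(\tau_n,y_0)\to y_0$. Second, in part~(2) the precise form of uniform stability you invoke is that of the \emph{set} $\omega_{\alpha_{y_0}}$ (one argument in the set, the other arbitrary but close), which is what (C2) provides; note also that the hypothesis $y_0\in\omega_{y_0}$ is implicitly needed in (2) as well, since $\gamma_{y_0}$ is defined via Lemma~\ref{lAPS2} with $q=y_0$, and your use of return times $r_n\to+\infty$ with $\sigma(r_n,y_0)\to y_0$ requires it. With these clarifications the argument is complete.
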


\begin{coro}\label{corP1} Under the conditions $(C1)-(C3)$ if the point $y_0$ is $\tau$-periodic
(respectively, Levitan almost periodic, almost recurrent, almost
automorphic, recurrent, Poisson stable), then:
\begin{enumerate}
\item the point $\gamma_{y_0}$ is so;
\item the point $\alpha_{y_0}$ is asymptotically $\tau$-periodic
(respectively, asymptotically Levitan almost periodic,
asymptotically almost recurrent, asymptotically almost automorphic,
asymptotically recurrent, asymptotically Poisson stable).
\end{enumerate}
\end{coro}

\begin{definition}\label{defP1} A point $x_0\in X$ is said to be:
\begin{enumerate}
\item[-] pseudo recurrent \cite{Shc_1964,Sch85,sib} if for any $\varepsilon
>0,\ p\in \Sigma_{x_0}:=\{\pi(t,x_0):\ t\in\mathbb T\}$ and $t_0\in
\mathbb T$ there exists $L=L(\varepsilon,t_0)>0$ such that
\begin{equation}\label{eqLP1.0}
B(p,\varepsilon)\bigcap \pi([t_0,t_0+L],p)\not=\emptyset ,\nonumber
\end{equation}
where $B(p,\varepsilon):=\{x\in X:\ \rho(p,x)<\varepsilon\}$ and
$\pi([t_0,t_0+L],p):=\{\pi(t,p):\ t\in [t_0,t_0+L]\}$; \item[-]
uniformly Poisson stable \cite{Beb_1940} (or pseudo peiodic
\cite[ChII,p.32]{Bohr_I1947}) if for arbitrary $\varepsilon >0$
and $l>0$ there exists a number $\tau >l$ such that
$\rho(\pi(t+\tau,x),\pi(t,x))<\varepsilon $ for any $t\in \mathbb
T$.
\end{enumerate}
\end{definition}

\begin{remark}\label{remP2} 1. Every recurrent (respectively, uniformly Poisson stable) point is pseudo
recurrent. The inverse statement, generally speaking, is not true.

2. If $x_0\in X$ is a pseudo recurrent point, then $p\in \omega_{p}$
for  any $p\in H(x_0)$.

3. If $x_0$ is a Lagrange stable point and $p\in\omega_{p}$ for any
$p\in H(x_0)$, then the point $x_0$ is pseudo recurrent.
\end{remark}

\begin{definition}\label{defSP1} A point $x\in X$ is said to be
strongly Poisson stable if $p\in \omega_{p}$ for any $p\in H(x)$.
\end{definition}

\begin{remark}\label{remP3}  Every pseudo
recurrent point is strongly Poisson stable. The inverse statement,
generally speaking, is not true.
\end{remark}

\begin{theorem}\label{thM2} \cite{CL_2017} Assume that $(C1)$--$(C3)$ hold, $x_0\in X$ and
$y_0:=h(x_0)\in Y$ is strongly Poisson stable.
Then the following statements hold:
\begin{enumerate}
\item the point $\gamma_{y_0}$ (respectively, $\delta_{y_0}$) is
strongly comparable by character of recurrence with $y _0$ and
\begin{equation}\label{eqPM3}
\lim\limits_{t\to
+\infty}\rho(\pi(t,\alpha_{y_0}),\pi(t,\gamma_{y_0}))=0 .\nonumber
\end{equation}
\end{enumerate}
\end{theorem}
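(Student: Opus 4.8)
The plan is to reduce the desired strong comparability, i.e. the inclusion $\mathfrak M_{y_0}\subseteq\mathfrak M_{\gamma_{y_0}}$, to \emph{uniform} comparability of $\gamma_{y_0}$ with $y_0$, and then to invoke Theorem \ref{thShc5}. Since the base $Y$ (and hence $H(y_0)$) need not be compact, Theorem \ref{thShc6} is unavailable, so Theorem \ref{thShc5} is the only one of Shcherbakov's criteria that applies; it requires only that $X$ be complete, which holds. Thus the whole task becomes: for every $\varepsilon>0$ exhibit a $\delta>0$ such that $d(\sigma(t_1,y_0),\sigma(t_2,y_0))<\delta$ forces $\rho(\pi(t_1,\gamma_{y_0}),\pi(t_2,\gamma_{y_0}))<\varepsilon$ for \emph{all} $t_1,t_2\in\mathbb T$.

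First I would propagate the fibrewise construction of Lemma \ref{lAPS2} over the entire hull. Strong Poisson stability of $y_0$ means $q\in\omega_q$ for every $q\in H(y_0)$, and since $y_0\in\omega_{y_0}$ one checks (a routine consequence of invariance) that $\omega_q\subseteq\omega_{y_0}$; hence Lemma \ref{lAPS2} and Theorem \ref{thM1} apply at every $q\in H(y_0)$. This produces a single-valued section $q\mapsto\gamma_q$ with $\{\gamma_q\}=\omega_{\alpha_q}\cap X_q$, each $\gamma_q$ comparable (pointwise) with $q$, together with the attraction $\lim\limits_{t\to+\infty}\rho(\pi(t,\alpha_q),\pi(t,\gamma_q))=0$. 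In particular the asserted limit $\lim\limits_{t\to+\infty}\rho(\pi(t,\alpha_{y_0}),\pi(t,\gamma_{y_0}))=0$ is exactly Theorem \ref{thM1}(2) at $q=y_0$, so that part of the statement is already in hand.

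Next I would prove that the section is \emph{invariant}, $\pi(\tau,\gamma_q)=\gamma_{\sigma(\tau,q)}$ for all $\tau\in\mathbb T$ and $q\in H(y_0)$. The point $\pi(\tau,\gamma_q)$ lies in the fibre $X_{\sigma(\tau,q)}$; using monotonicity (C3) to relate $\alpha_{\sigma(\tau,q)}$ with the forward image of $\alpha_q$ and pushing the membership $\gamma_q\in\omega_{\alpha_q}$ forward along the flow, one checks $\pi(\tau,\gamma_q)\in\omega_{\alpha_{\sigma(\tau,q)}}\cap X_{\sigma(\tau,q)}$, whence the single-point property of Lemma \ref{lAPS2} at $\sigma(\tau,q)$ forces equality. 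Invariance then lets me rewrite the uniform-comparability inequality: setting $q:=\sigma(t_2,y_0)$ and $\tau:=t_1-t_2$, it becomes the pointwise comparison $d(\sigma(\tau,q),q)<\delta\Rightarrow\rho(\pi(\tau,\gamma_q),\gamma_q)<\varepsilon$, now required to hold at every $q\in H(y_0)$ with one and the same $\delta$.

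The main obstacle is precisely this uniformity of $\delta$ over the noncompact hull. Theorem \ref{thM1} supplies the comparison estimate at each $q$ separately, but a priori with a $q$-dependent modulus, and the usual compactness/uniform-continuity argument is blocked because $H(y_0)$ is not compact. The way out is hypothesis (C2): the $\omega$-limit set $\omega_{x_0}$, which contains the whole graph $\{\gamma_q:q\in H(y_0)\}$, is positively \emph{uniformly} stable, so its stability modulus is independent of the fibre; feeding this common modulus into the comparison argument underlying Theorem \ref{thM1} yields a single $\delta$ valid simultaneously for all $q\in H(y_0)$. This establishes uniform comparability of $\gamma_{y_0}$ with $y_0$, and Theorem \ref{thShc5} then gives $\mathfrak M_{y_0}\subseteq\mathfrak M_{\gamma_{y_0}}$. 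The identical reasoning applied to the supremum section $q\mapsto\delta_q$ yields the corresponding assertion for $\delta_{y_0}$, completing the proof.
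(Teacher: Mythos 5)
The paper contains no proof of Theorem \ref{thM2}: it is imported verbatim from \cite{CL_2017}, so there is no internal argument to compare yours against. Judged on its own terms, your proposal has a genuine gap at its central step. You reduce everything to \emph{uniform} comparability of $\gamma_{y_0}$ with $y_0$ and then invoke Theorem \ref{thShc5}. After your (plausible) reductions via the invariant section $q\mapsto\gamma_q$, what you must prove is: for every $\varepsilon>0$ there is one $\delta>0$ such that $d(\sigma(\tau,q),q)<\delta$ implies $\rho(\gamma_{\sigma(\tau,q)},\gamma_q)<\varepsilon$ for all $q$ in the (noncompact) hull. You propose to obtain the $q$-independent $\delta$ by feeding the common stability modulus of (C2) into ``the comparison argument underlying Theorem \ref{thM1}''; but that argument is itself a black box here (Theorem \ref{thM1} is also only cited), and, more importantly, positive uniform stability only controls the divergence of forward orbits issuing from $\delta$-close points \emph{in the same fibre}, whereas the quantity you must bound compares $\gamma_{\sigma(\tau,q)}$ and $\gamma_q$, which live over \emph{different} base points. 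Bridging that is exactly the (uniform) continuity of the section $q\mapsto\gamma_q$ over a noncompact hull, i.e., essentially the statement to be proved, so the uniformization step is unjustified and close to circular. Note also that uniform comparability is in general strictly stronger than the inclusion $\mathfrak M_{y_0}\subseteq\mathfrak M_{\gamma_{y_0}}$ unless $y_0$ is Lagrange stable (Theorem \ref{thShc6}), which is not assumed; you are attempting to prove more than the theorem asserts.

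The route that actually yields the conclusion (and is the one used in \cite{CL_2017} and in the Jiang--Zhao line of argument) is direct: take $\{t_n\}\in\mathfrak M_{y_0}$, so $\sigma(t_n,y_0)\to q$; conditional precompactness from (C2) makes $\{\pi(t_n,\gamma_{y_0})\}$ precompact; any limit point $p$ lies in $X_q$, and using $\rho(\pi(t,\alpha_{y_0}),\pi(t,\gamma_{y_0}))\to 0$ together with the uniform stability of $\omega_{x_0}$ one shows $p\in\omega_{\alpha_q}\cap X_q$, which is the singleton $\{\gamma_q\}$ by Lemma \ref{lAPS2}; hence all limit points coincide and $\pi(t_n,\gamma_{y_0})$ converges. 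This gives $\mathfrak M_{y_0}\subseteq\mathfrak M_{\gamma_{y_0}}$ without any uniformization of $\delta$ over the hull. I would recommend recasting your argument in this form; the parts of your proposal concerning the section $q\mapsto\gamma_q$, its invariance, and the limit relation from Theorem \ref{thM1}(2) can be retained as they stand.
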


\begin{coro}\label{corP2_1} Under the conditions $(C1)-(C3)$ if the point $y_0$ is $\tau$-periodic
(respectively, quasi periodic, Bohr almost periodic, recurrent,
pseudo recurrent and Lagrange stable), then:
\begin{enumerate}
\item the point $u_{y_0}$ is so;
\item the point $\alpha_{y_0}$ is asymptotically $\tau$-periodic
(respectively, asymptotically quasi periodic, asymptotically
Bohr almost periodic, asymptotically recurrent,
pseudo recurrent).
\end{enumerate}
\end{coro}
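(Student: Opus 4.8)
The plan is to derive this corollary directly from Theorem \ref{thM2} together with B.~A.~Shcherbakov's comparability principle (Theorems \ref{thShc2.1}, \ref{tS2}, \ref{thShc5} and \ref{thShc6}). The first thing I would do is verify that in each listed case the hypothesis of Theorem \ref{thM2} is satisfied, i.e. that $y_0$ is strongly Poisson stable. For a $\tau$-periodic, quasi periodic, Bohr almost periodic or recurrent point this is immediate, since then $H(y_0)$ is compact and every point of $H(y_0)$ lies in its own $\omega$-limit set; in the pseudo recurrent case it is precisely the content of Remark \ref{remP2} (item~2). Thus Theorem \ref{thM2} applies and yields both the inclusion $\mathfrak M_{y_0}\subseteq \mathfrak M_{\gamma_{y_0}}$ (strong comparability of $\gamma_{y_0}$ with $y_0$) and the asymptotic relation
\begin{equation*}
\lim_{t\to+\infty}\rho(\pi(t,\alpha_{y_0}),\pi(t,\gamma_{y_0}))=0 ,
\end{equation*}
where $\gamma_{y_0}$ is the point of Theorem \ref{thM2} (the point $u_{y_0}$ of the statement).

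Next I would upgrade strong comparability to \emph{uniform} comparability. In every listed case $y_0$ is stable in the sense of Lagrange (for periodic, quasi periodic, Bohr almost periodic and recurrent points $H(y_0)$ is compact; in the pseudo recurrent case Lagrange stability is assumed outright). Hence by Theorem \ref{thShc6} the inclusion $\mathfrak M_{y_0}\subseteq \mathfrak M_{\gamma_{y_0}}$ forces $\gamma_{y_0}$ to be stable in the sense of Lagrange and uniformly comparable by character of recurrence with $y_0$. With uniform comparability in hand, statement (i) is obtained by a short case analysis: for $\tau$-periodic $y_0$ I apply Theorem \ref{thShc2.1} (uniform comparability implies comparability in the sense of Definition \ref{defShc1}), while for recurrent and Bohr almost periodic $y_0$ I apply Theorem \ref{tS2}; the quasi periodic case follows from Theorem \ref{tS2}, which already makes $\gamma_{y_0}$ almost periodic, combined with the module containment $\mathrm{Mod}(\gamma_{y_0})\subseteq \mathrm{Mod}(y_0)$ furnished by comparability, so that $\gamma_{y_0}$ has a finitely generated frequency module and is therefore quasi periodic.

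The pseudo recurrent case is the step I expect to require the most care, since it is not directly covered by Theorems \ref{thShc2.1}--\ref{tS2}. Here I would use the characterization in Remark \ref{remP2} (item~3): $\gamma_{y_0}$ is pseudo recurrent as soon as it is Lagrange stable (already established) and strongly Poisson stable, that is, $p\in\omega_p$ for every $p\in H(\gamma_{y_0})$. To verify the latter I would transport strong Poisson stability of $y_0$ along the fibers: each $p\in H(\gamma_{y_0})$ projects to a Poisson stable point $h(p)\in H(y_0)$, and since uniform comparability is inherited by the corresponding points of the two hulls, $p$ is comparable with $h(p)$, whence $p$ is Poisson stable by Theorem \ref{thShc2.1}. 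Thus $\gamma_{y_0}$ is strongly Poisson stable, and Remark \ref{remP2} (item~3) gives pseudo recurrence. The delicate point is exactly this propagation of uniform comparability to the whole hull rather than merely at the base point.

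Finally, statement (ii) follows at once from the displayed limit: the forward semitrajectory of $\alpha_{y_0}$ is asymptotic to that of $\gamma_{y_0}$, and $\gamma_{y_0}$ enjoys the recurrence property established in (i); by the very definition of the asymptotic versions of these properties, $\alpha_{y_0}$ is asymptotically $\tau$-periodic (respectively, asymptotically quasi periodic, asymptotically Bohr almost periodic, asymptotically recurrent, pseudo recurrent). Apart from the pseudo recurrent case flagged above, every step is a routine invocation of the comparability principle once the strong Poisson stability of $y_0$ and the Lagrange stability needed for Theorem \ref{thShc6} have been checked.
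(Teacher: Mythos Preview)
Your proposal is correct and follows essentially the same route the paper intends: the corollary is stated without proof, but by analogy with the proof of Corollary~\ref{corPM1} (which the paper dispatches in one line by citing Theorems~\ref{thBM1}, \ref{thShc2.1} and~\ref{tS2}), the intended argument is exactly to combine Theorem~\ref{thM2} with Shcherbakov's comparability principle. Your write-up is in fact more careful than the paper: you explicitly check strong Poisson stability of $y_0$ so that Theorem~\ref{thM2} applies, you invoke Theorem~\ref{thShc6} to pass from strong to uniform comparability (needed for Theorem~\ref{tS2}), and you flag the quasi periodic and pseudo recurrent cases, which are not literally listed in Theorems~\ref{thShc2.1} or~\ref{tS2}, as requiring a short additional argument---points the paper glosses over entirely.
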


\begin{remark}\label{remP01} 1. If the point $y_0$ is recurrent (in
the sense of Birkhoff), then Corollary \ref{corP2_1} coincides with
the results of the work  of J. Jiang and X.-Q. Zhao \cite{JZ_2005}.

2. In the works of B. A. Shcherbakov \cite{Shc_1962}-\cite{Shc_1964}, \cite[ChV, Example 5.2.1]{scher72}
 were constructed examples
of pseudo recurrent and Lagrange stable motions which are not
recurrent (in the sense of Birkhoff).
\end{remark}

\section{Global Attractors of Cocycles}\label{S3}

Let $W$ be a complete metric space.

\begin{definition}\label{def2.7.1}
The family $ \{ I_{y} \ \vert \   y \in Y \} \ ( I_{y} \subset W
)$ of nonempty compact subsets $W$ is called (see, for example,
\cite{Arn} and \cite{FlaSchm95a}) a compact pullback
attractor\index{pullback attractor} (uniform pullback attractor
\index{uniform pullback attractor}) of a cocycle $ \varphi $, if
the following conditions hold:
\begin{enumerate}
\item the set $ I := \bigcup \{ I_{y} \ \vert \   y \in Y \} $ is
relatively compact; \item the family $ \{ I_{y} \ \vert \   y \in
Y \} $ is invariant with respect to the cocycle $ \varphi $, i.e.
$ \varphi (t, I_{y}, y ) = I_{\sigma (t,y )} $ for all $ t \in
\mathbb T_{+} $ and $ y \in Y $; \item for all $ y \in Y $
(uniformly in $y \in Y$) and $ K \in C(W) $
$$
\lim \limits _{t \to + \infty } \beta (\varphi (t, K,
y_{\sigma(-t,y)} ), I_{y}) =0 ,
$$
where $ \beta (A,B): = \sup \{ \rho (a,B) : a \in  A \} $ is a
semi-distance of Hausdorff.
\end{enumerate}
\end{definition}

\begin{definition}\label{def2.7.2}
The family $ \{ I_{y} \ \vert \   y \in Y \} ( I_{y} \subset W ) $
of nonempty compact subsets is called a compact (forward) global
attractor of the cocycle $\varphi,$ if the following conditions
are fulfilled:
\begin{enumerate}
\item the set $ I := \bigcup \{ I_{y} \ \vert \   y \in Y \} $ is
relatively compact; \item the family $ \{ I_{y} \ \vert \   y \in
Y \} $ is invariant with respect to the cocycle $ \varphi $; \item
the equality
$$
\lim \limits _{t \to + \infty } \sup \limits _{y \in Y } \beta
(\varphi (t,K,y ),I)=0
$$
holds for every $K\in C(W)$.
\end{enumerate}
\end{definition}

Let $M \subseteq W$ and
\begin{equation}\label{eq2.7.3}
\omega_y(M):=\bigcap_{t\ge 0} \overline{\bigcup_{\tau\ge t}
\varphi(\tau,M,\sigma(-\tau,y))} \ \nonumber
\end{equation}
for any $y \in Y$.

\begin{lemma}\label{l2.7.1}\cite[ChII]{Che_2015} The following statements hold:
\begin{enumerate}
\item The point $ p \in \omega_y(M)$ if and only if there exit
$t_n \to + \infty $ and $ \{x_n \} \subseteq M$ such that $
p=\lim\limits_{n \to + \infty} \varphi(t_n,x_n,\sigma(-t_n,y));$
\item $ U(t,y) \omega_y(M) \subseteq \omega_{\sigma(t,y)}(M) $ for
all $ y \in Y $ and $ t \in \mathbb T_+$, where
$U(t,y):=\varphi(t,\cdot,y);$ \item for any point $ w \in
\omega_y(M)$ the motion $\varphi(t,w,y)$ is defined on $ \mathbb S
$; \item if there exits a nonempty compact $ K \subset W $ such
that
$$
\lim_{t \to + \infty} \beta (\varphi(t,M,\sigma(-t,y)),K)=0,
$$
then $\omega_y(M) \ne \emptyset $, is compact,
\begin{equation}\label{eq2.7.5}
\lim\limits_{t \to + \infty} \beta (\varphi(t,M,\sigma(-t,y)),
   \omega _y(M))=0 \nonumber
\end{equation}
and
\begin{equation}\label{eq2.7.6}
U(t,y)\omega_y(M)=\omega_{\sigma(t,y)}(M) \nonumber
\end{equation}
for all $y \in Y$ and  $t \in \mathbb T_{+}$ .
\end{enumerate}
\end{lemma}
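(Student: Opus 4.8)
The plan is to handle the four assertions separately, since (1) is essentially the definition, (2) and the reverse inclusion in (4) are applications of the cocycle identity, the topological content of (4) comes from the compactness hypothesis, and (3) is the genuinely delicate backward-continuation statement. First I would prove (1) by unwinding the nested intersection $\omega_y(M)=\bigcap_{t\ge0}\overline{\bigcup_{\tau\ge t}\varphi(\tau,M,\sigma(-\tau,y))}$. For the forward direction, if $p\in\omega_y(M)$ then $p$ lies in the $n$-th closure for every $n$, so I can pick $\tau_n\ge n$ and $x_n\in M$ with $\rho(\varphi(\tau_n,x_n,\sigma(-\tau_n,y)),p)<1/n$; then $\tau_n\to+\infty$ and this sequence converges to $p$. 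Conversely, given $t_n\to+\infty$ and $x_n\in M$ with $\varphi(t_n,x_n,\sigma(-t_n,y))\to p$, I fix $t\ge0$ and note $t_n\ge t$ for all large $n$, so the tail lies in $\bigcup_{\tau\ge t}\varphi(\tau,M,\sigma(-\tau,y))$; hence $p$ is in its closure, and since $t$ is arbitrary, $p\in\omega_y(M)$.

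For (2), take $w\in\omega_y(M)$ with approximating data $t_n\to+\infty$, $x_n\in M$. The cocycle identity $\varphi(t+\tau,u,y')=\varphi(t,\varphi(\tau,u,y'),\sigma(\tau,y'))$, applied with $\tau=t_n$, $y'=\sigma(-t_n,y)$, $u=x_n$, gives $\varphi(t+t_n,x_n,\sigma(-t_n,y))=\varphi(t,\varphi(t_n,x_n,\sigma(-t_n,y)),y)$ since $\sigma(t_n,\sigma(-t_n,y))=y$. As $n\to\infty$ the right side tends to $\varphi(t,w,y)=U(t,y)w$ by continuity. On the left, setting $s_n:=t+t_n\to+\infty$ and observing $\sigma(-s_n,\sigma(t,y))=\sigma(-t_n,y)$, the left side equals $\varphi(s_n,x_n,\sigma(-s_n,\sigma(t,y)))$, which is precisely the form (1) for the base point $\sigma(t,y)$. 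Hence $U(t,y)w\in\omega_{\sigma(t,y)}(M)$.

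For (4) I would invoke the attraction hypothesis $\beta(\varphi(t,M,\sigma(-t,y)),K)\to0$. Any sequence $\varphi(t_n,x_n,\sigma(-t_n,y))$ with $t_n\to+\infty$ is eventually within any prescribed distance of the compact $K$, so a subsequence converges to a point of $K$, which by (1) lies in $\omega_y(M)$; thus $\omega_y(M)\neq\emptyset$ and $\omega_y(M)\subseteq K$. Being closed in $K$, it is compact. The attraction statement $\beta(\varphi(t,M,\sigma(-t,y)),\omega_y(M))\to0$ I would get by contradiction: a violating sequence is asymptotically in $K$, has a convergent subsequence whose limit is in $\omega_y(M)$ by (1), contradicting that it stays at distance $\ge\varepsilon$. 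For the invariance equality, $\subseteq$ is (2); for $\supseteq$ I take $v\in\omega_{\sigma(t,y)}(M)$, write it via (1) as $\lim_n\varphi(s_n,x_n,\sigma(-s_n,\sigma(t,y)))$ with $s_n\to+\infty$, split off the leading time $t$ by the cocycle identity to obtain $\varphi(t,z_n,y)$ with $z_n=\varphi(s_n-t,x_n,\sigma(-(s_n-t),y))$, and extract from $(z_n)$, which is asymptotically in $K$, a subsequence converging to some $z\in\omega_y(M)$; continuity yields $v=\varphi(t,z,y)\in U(t,y)\omega_y(M)$.

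The hardest part is (3), the existence of an entire trajectory through each $w\in\omega_y(M)$. Here I would run a backward diagonal construction: for each $k\ge0$ the segment points $\varphi(t_n-k,x_n,\sigma(-t_n,y))$ (defined once $t_n\ge k$) rewrite, via the group law on $Y$, as $\varphi(t_n-k,x_n,\sigma(-(t_n-k),\sigma(-k,y)))$, so their limit points lie in $\omega_{\sigma(-k,y)}(M)$ by (1). A diagonal subsequence converging simultaneously for all $k$ produces points $w^{(k)}\in\omega_{\sigma(-k,y)}(M)$ with $\varphi(k,w^{(k)},\sigma(-k,y))=w$ and mutual compatibility, which patch into a map $\nu:\mathbb S\to W$ with $\nu(0)=w$ satisfying the entire-trajectory relation. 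The main obstacle, and the reason this step is harder than (1)--(2), is that the extraction demands relative compactness of the backward segments; this is exactly what the compactness supplied by the hypothesis of (4) provides, and it is the same mechanism underlying the reverse inclusion in (4).
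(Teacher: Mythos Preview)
The paper does not supply its own proof of this lemma; it is quoted verbatim from \cite[Ch.\,II]{Che_2015} with no argument given, so there is nothing in the text to compare against. Your proof is the standard one and is correct for items (i), (ii), and (iv).

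For (iii) you rightly observe that the diagonal extraction of the backward points $w^{(k)}$ requires relative compactness of the sequences
\[
\varphi(t_n-k,x_n,\sigma(-t_n,y))=\varphi\bigl(t_n-k,\,x_n,\,\sigma(-(t_n-k),\sigma(-k,y))\bigr),
\]
which amounts to the attraction hypothesis of (iv) holding not only at the base point $y$ but at every backward translate $\sigma(-k,y)$. In the only places the paper invokes the lemma (Theorems \ref{t2.7.3} and \ref{t2.7.5}) the cocycle is compactly dissipative in the sense of Definition \ref{def2.7.3}, where a single compact $K$ attracts uniformly over all $y\in Y$, so this stronger hypothesis is available and your construction goes through cleanly. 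As the lemma is literally stated here, however, item (iii) carries no compactness assumption at all, so the claim and the argument you give for it are not quite aligned; this is a defect in the formulation (the symbol $\mathbb S$ is also undefined) rather than in your reasoning.
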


\begin{definition}\label{def2.7.3}
A cocycle $\varphi $ over $(Y,\mathbb T,\sigma)$ with the fiber
$W$ is said to be compactly dissipative, if there exits a nonempty
compact $K \subseteq W$ such that
\begin{equation}\label{eq2.7.8}
\lim_{t \to + \infty} \sup \{ \beta (U(t,y)M,K) \ \vert \  y \in Y
\}=0
\end{equation}
for any  $M \in C(W)$.
\end{definition}

\begin{theorem}\label{t2.7.3}\cite[ChII]{Che_2015}
Let $\langle W,\varphi,(Y,\mathbb T,\sigma)\rangle $ be compactly
dissipative and $K$ be the nonempty compact subset of $W$
appearing in the equality (\ref{eq2.7.8}), then:
\begin{enumerate}
\item[1.] $I_y=\omega_y(K) \ne \emptyset $, is compact, $ I_y
\subseteq K$ and
$$
\lim_{t \to + \infty} \beta(U(t,\sigma(-t,y))K,I_y)=0
$$
for every $y \in Y$; \item[2.] $U(t,y)I_y=I_{y t} $ for all $y \in
Y$ and $t \in \mathbb T_+$; \item[3.]
\begin{equation}\label{eq2.7.10}
\lim_{t \to + \infty}\beta(U(t,\sigma(-t,y))M,I_y)=0 \nonumber
\end{equation}
for all $M \in C(W)$ and $ y \in Y$ ; \item[4.] the set $I$ is
relatively compact, where $I:=\cup \{ I_y \ \vert \ y \in Y \} $.
\end{enumerate}
\end{theorem}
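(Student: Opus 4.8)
The plan is to derive all four statements from the pullback‑attraction machinery of Lemma~\ref{l2.7.1} together with the uniform (in the base point) attraction guaranteed by compact dissipativity (Definition~\ref{def2.7.3}). First, for statement~1 I would verify the hypothesis of Lemma~\ref{l2.7.1}(4) with $M=K$. Since $\varphi(t,K,\sigma(-t,y))=U(t,\sigma(-t,y))K$, compact dissipativity applied to $M=K$ gives, for each fixed $y$,
$$
\beta(U(t,\sigma(-t,y))K,K)\le \sup_{z\in Y}\beta(U(t,z)K,K)\to 0\quad (t\to+\infty).
$$
Hence Lemma~\ref{l2.7.1}(4) applies and yields that $I_y:=\omega_y(K)$ is nonempty and compact, that $\lim_{t\to+\infty}\beta(U(t,\sigma(-t,y))K,I_y)=0$, and the invariance $U(t,y)I_y=I_{\sigma(t,y)}=I_{yt}$, which is exactly statement~2. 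The inclusion $I_y\subseteq K$ follows from the description in Lemma~\ref{l2.7.1}(1): any $p\in I_y$ is a limit of points $\varphi(t_n,x_n,\sigma(-t_n,y))$ with $x_n\in K$, and since $\beta(\varphi(t_n,K,\sigma(-t_n,y)),K)\to 0$ we get $\rho(p,K)=0$, so $p\in K$. Statement~4 is then immediate: $I=\bigcup_{y}I_y\subseteq K$ with $K$ compact, so $I$ is relatively compact.

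The main work is statement~3, the upgrade from attraction towards the fixed compact $K$ to pullback attraction towards the fibers $I_y$ for an arbitrary $M\in C(W)$. Fix $y$ and $\varepsilon>0$. Using statement~1, choose $s$ so large that $\beta(U(s,\sigma(-s,y))K,I_y)<\varepsilon/2$, and abbreviate $g:=U(s,\sigma(-s,y))$, a continuous self‑map of $W$. Because $K$ is compact and $g$ is continuous, there is $\delta>0$ such that $\rho(w,K)<\delta$ implies $\rho(g(w),g(K))<\varepsilon/2$ (a compactness argument: otherwise a sequence $w_n$ with $\rho(w_n,K)\to0$ but $\rho(g(w_n),g(K))\ge\varepsilon_0$ would, after passing to a subsequence with $w_n\to k\in K$, contradict continuity of $g$). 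Next, by compact dissipativity pick $T$ so that $\beta(U(\tau,z)M,K)<\delta$ for all $\tau\ge T$ and all $z\in Y$. Then for $t\ge T+s$ write $t=\tau+s$ with $\tau\ge T$ and use the cocycle identity $U(t,\sigma(-t,y))=g\circ U(\tau,\sigma(-t,y))$. Setting $A:=U(\tau,\sigma(-t,y))M$ we have $\beta(A,K)<\delta$, hence $\beta(g(A),g(K))\le\varepsilon/2$, and by the triangle inequality for the Hausdorff semidistance $\beta$,
$$
\beta(U(t,\sigma(-t,y))M,I_y)\le \beta(g(A),g(K))+\beta(g(K),I_y)<\varepsilon .
$$
This proves statement~3.

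The main obstacle I anticipate is precisely this last step: bridging the gap between ``$A$ is within $\delta$ of $K$'' and ``$g(A)$ is within $\varepsilon/2$ of $g(K)$.'' A single continuous map has no a priori modulus of continuity on all of $W$, so the argument rests on exploiting the compactness of $K$ to obtain the uniform control $\rho(w,K)<\delta\Rightarrow\rho(g(w),g(K))<\varepsilon/2$, and on the fact that the attraction in Definition~\ref{def2.7.3} is uniform in the base point $z$ (so that the pullback choice $z=\sigma(-t,y)$ is controlled simultaneously). Everything else reduces to bookkeeping with the cocycle identity and the triangle inequality for $\beta$.
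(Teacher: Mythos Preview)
The paper does not contain a proof of this theorem; it is quoted from \cite[ChII]{Che_2015} as a known result and stated without proof, so there is nothing in the paper to compare your argument against.

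That said, your argument is correct. Statements~1, 2, and 4 follow immediately from Lemma~\ref{l2.7.1} once you observe that compact dissipativity supplies the hypothesis of item~(4) of that lemma with $M=K$ for every $y$ simultaneously (via $\beta(U(t,\sigma(-t,y))K,K)\le\sup_{z}\beta(U(t,z)K,K)\to0$), and the inclusion $I_y\subseteq K$ is read off from the sequential description in item~(1) together with closedness of $K$. For statement~3, the ``split the time'' argument you give is the standard one: write $t=\tau+s$, use the cocycle identity $U(t,\sigma(-t,y))=U(s,\sigma(-s,y))\circ U(\tau,\sigma(-t,y))$, absorb $U(\tau,\sigma(-t,y))M$ into a $\delta$-neighborhood of $K$ by uniform dissipativity, and then push forward by $g:=U(s,\sigma(-s,y))$. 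The step you flagged---finding $\delta>0$ so that $\rho(w,K)<\delta$ implies $\rho(g(w),g(K))<\varepsilon/2$---is justified exactly by the compactness of $K$ and continuity of $g$ on $W$: if it failed, a sequence $w_n$ with $\rho(w_n,K)<1/n$ would, via $k_n\in K$ with $\rho(w_n,k_n)<1/n$ and a convergent subsequence $k_{n_j}\to k\in K$, satisfy $w_{n_j}\to k$ and hence $g(w_{n_j})\to g(k)\in g(K)$, contradicting $\rho(g(w_{n_j}),g(K))\ge\varepsilon/2$. The use of the uniformity in $z\in Y$ in Definition~\ref{def2.7.3} is essential and you invoke it correctly when choosing $T$.
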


\begin{theorem}\label{t2.7.3F}
Let $\langle W,\varphi,(Y,\mathbb T,\sigma)\rangle $ be compactly
dissipative and $K$ be the nonempty compact subset of $W$
appearing in the equality (\ref{eq2.7.8}), then the family of
subsets $\{I_{y}|\ y\in Y\}$ is a maximal family possessing the
properties 2.--4.
\end{theorem}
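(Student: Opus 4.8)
The plan is to split the claim into two parts. First, that the family $\{I_y\mid y\in Y\}$ itself enjoys properties 2.--4.: this is precisely the content of items 2.--4. of Theorem~\ref{t2.7.3}, so nothing new is required there. The real substance is \emph{maximality}, which I read as follows: if $\{J_y\mid y\in Y\}$ is any family of nonempty compact subsets of $W$ satisfying 2.--4. (with $I$ replaced by $J$ where it occurs), then $J_y\subseteq I_y$ for every $y\in Y$. I would prove exactly this inclusion, since together with the first part it says that $\{I_y\}$ is the largest family with properties 2.--4.

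The key idea is to run the invariance property backwards in time and then attract the whole union family by the genuine attractor $\{I_y\}$. Concretely, I proceed in three steps. (a) From invariance $U(t,y)J_y=J_{\sigma(t,y)}$, replacing $y$ by $\sigma(-t,y)$ and using that $(Y,\mathbb T,\sigma)$ is a two-sided system (so that $\sigma(t,\sigma(-t,y))=y$), I obtain $J_y=U(t,\sigma(-t,y))\,J_{\sigma(-t,y)}$ for every $t\ge 0$. (b) I set $M:=\overline{\bigcup_{y'\in Y}J_{y'}}$; by property 4. the union is relatively compact, so $M$ is a nonempty compact set, i.e. $M\in C(W)$. Since $J_{\sigma(-t,y)}\subseteq M$ and $U(t,\sigma(-t,y))$ is a mapping, step (a) yields the inclusion $J_y\subseteq U(t,\sigma(-t,y))M$ valid for all $t\ge 0$. (c) By monotonicity of the Hausdorff semidistance $\beta$, this gives $\beta(J_y,I_y)\le \beta(U(t,\sigma(-t,y))M,I_y)$ for every $t\ge 0$. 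The left-hand side is independent of $t$, while the right-hand side tends to $0$ as $t\to+\infty$ by item 3. of Theorem~\ref{t2.7.3} applied to this very $M\in C(W)$. Hence $\beta(J_y,I_y)=0$, i.e. $J_y\subseteq\overline{I_y}=I_y$ (recall $I_y$ is compact, hence closed). As $y\in Y$ was arbitrary, this is the desired maximality.

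The main point — more a subtlety than a genuine obstacle — lies in steps (a)--(b): one must pull $J_y$ back to the fibre over $\sigma(-t,y)$ precisely so that it is carried forward from inside a single \emph{fixed} compact set $M$ which is then pullback-attracted, and this manoeuvre relies essentially on $\sigma$ being a two-sided flow. It is worth observing that the argument invokes only properties 2. and 4. of the competing family $\{J_y\}$ (invariance and relative compactness of the union), and not its attraction property 3.; thus $\{I_y\}$ is in fact maximal already among all invariant, fibrewise relatively compact families, which is slightly stronger than the stated conclusion.
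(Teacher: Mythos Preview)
Your proof is correct and follows essentially the same route as the paper's: set $M$ equal to the closure of the union of the competing family, use invariance to write $J_y=U(t,\sigma(-t,y))J_{\sigma(-t,y)}\subseteq U(t,\sigma(-t,y))M$, and then apply the pullback attraction of item~3.\ of Theorem~\ref{t2.7.3} to the compact $M$ to conclude $J_y\subseteq I_y$. The paper phrases the final step in terms of $\varepsilon$-neighbourhoods rather than the semidistance $\beta$, but this is purely cosmetic; your additional remark that only properties~2.\ and~4.\ of the competing family are used is a valid sharpening.
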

\begin{proof} Let $\{I_{y}^{'}|\ y\in Y\}$ be a family of subsets
possessing properties 2.--4. Denote by $I^{'}:=\bigcup
\{I_{y}^{'}|\ y\in Y\}$ and $M:=\overline{I^{'}}$, where by ar is
denoted the closure of $I^{'}$. Since $M\in C(W)$, then for
arbitrary $\varepsilon >0$ and $y\in Y$ there exists a positive
number $L(\varepsilon,y)$ such that
\begin{equation}\label{eqF2}
U(t,\sigma(-t,y))M\subseteq B(I_{y},\varepsilon) \nonumber
\end{equation}
for any $t\ge L(\varepsilon,y)$. Note that $I_{y}^{'} =
U(t,\sigma(-t,y))I_{\sigma(t,y)}^{'}\subseteq
U(t,\sigma(-t,y))M\subseteq B(I_{y},\varepsilon)$. Since
$\varepsilon$ is an arbitrary positive number we obtain
$I^{'}_{y}\subseteq I_{y}$ for any $y\in Y$.
\end{proof}

\begin{definition}\label{defGA} Let $\langle W,\varphi,(Y,\mathbb T,\sigma)\rangle
$ be compactly dissipative, $K$ be the nonempty compact subset of
$W$ appearing in the equality (\ref{eq2.7.8}) and
$I_{y}:=\omega_{y}(K)$ for any $y\in Y$. The family of compact
subsets $\{I_y|\ y\in Y\}$ is said to be a Levinson center
(compact global attractor) of nonautonomous (cocycle) dynamical
system $\langle W,\varphi,(Y,\mathbb T,\sigma)\rangle $.
\end{definition}

\begin{remark}\label{remGA} According to Theorem \ref{t2.7.3F} by
definition \ref{defGA} is defined correctly the notion Levinson
center (compact global attractor) for nonautonomous (cocycle)
dynamical system $\langle W,\varphi,(Y,\mathbb T,\sigma)\rangle $.
\end{remark}

\begin{coro}\label{corF1}  Let $\langle W,\varphi,(Y,\mathbb T,\sigma)\rangle
$ be compactly dissipative nonautonomous dynamical system,
$\{I_{y}|\ y\in Y\}$ be its Levinson center and $\gamma :\mathbb
T\mapsto W$ be a relatively compact full trajectory of $\varphi $
(i.e., $\gamma(\mathbb T)$ is relatively compact and there exists
a point $y_0\in Y$ such that
$\gamma(t+s)=\varphi(t,\gamma(s),\sigma(s,y_0))$ for any $t\ge 0$
and $s\in\mathbb T$), then $\gamma (0)\in I_{y_0}$.
\end{coro}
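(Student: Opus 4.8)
The plan is to realise $\gamma(0)$ as a point belonging to every pullback image $U(t,\sigma(-t,y_0))M$ of a suitable compact set $M$, and then to pass to the limit $t\to+\infty$ using the attraction property of the Levinson center from Theorem \ref{t2.7.3}. First I would put $M:=\overline{\gamma(\mathbb T)}$; since $\gamma(\mathbb T)$ is relatively compact by hypothesis, $M$ is a nonempty compact subset of $W$, i.e. $M\in C(W)$. The key observation comes from the defining relation of a full trajectory, $\gamma(t+s)=\varphi(t,\gamma(s),\sigma(s,y_0))$, upon choosing $s=-t$ (which is admissible since $\mathbb T$ is a group and only $t\ge 0$ is required): this gives
\[
\gamma(0)=\varphi(t,\gamma(-t),\sigma(-t,y_0))\qquad (t\ge 0).
\]
As $\gamma(-t)\in M$, it follows that $\gamma(0)\in\varphi(t,M,\sigma(-t,y_0))=U(t,\sigma(-t,y_0))M$ for every $t\ge 0$.

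Next I would apply Theorem \ref{t2.7.3}(3), which for the compactly dissipative cocycle $\varphi$ yields
\[
\lim_{t\to+\infty}\beta\bigl(U(t,\sigma(-t,y_0))M,I_{y_0}\bigr)=0 .
\]
Because $\gamma(0)\in U(t,\sigma(-t,y_0))M$ for each $t\ge 0$, the definition of the Hausdorff semidistance $\beta$ gives $\rho(\gamma(0),I_{y_0})\le \beta(U(t,\sigma(-t,y_0))M,I_{y_0})$ for all such $t$. Letting $t\to+\infty$ forces $\rho(\gamma(0),I_{y_0})=0$, and since $I_{y_0}$ is compact, hence closed, this means $\gamma(0)\in I_{y_0}$, as claimed.

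I do not anticipate a serious obstacle: the argument hinges on the one slightly nonobvious move of reversing time via $s=-t$, which turns the relatively compact full orbit into the compact input $M$ required by Theorem \ref{t2.7.3}(3); the remainder is a direct estimate. An essentially equivalent alternative would be to observe, through Lemma \ref{l2.7.1}(1) with $t_n\to+\infty$ and $x_n:=\gamma(-t_n)\in M$, that $\gamma(0)\in\omega_{y_0}(M)$ and then to verify $\omega_{y_0}(M)\subseteq I_{y_0}$; the direct estimate above is preferable since it bypasses this extra inclusion.
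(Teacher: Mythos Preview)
Your proof is correct and follows essentially the same route as the paper: the corollary is stated without proof immediately after Theorem~\ref{t2.7.3F}, whose proof uses precisely the same device---take the closure of the orbit as the compact $M$, observe that the point in question lies in $U(t,\sigma(-t,y_0))M$ for all $t\ge 0$, and let the pullback attraction estimate of Theorem~\ref{t2.7.3}(3) force membership in $I_{y_0}$. Your write-up makes the time-reversal step $s=-t$ explicit and is entirely in line with the intended argument.
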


\begin{theorem}\label{t2.7.5}\cite[ChII]{Che_2015}
Under the conditions of Theorem \ref{t2.7.3} $ w \in I_{y} \ ( y
\in Y ) $ if and only if there exits a whole trajectory $ \nu :
\mathbb T \to W $ of the cocycle $\varphi,$ satisfying the
following conditions: $ \nu (0) = w $ and $ \nu (\mathbb T ) $ is
relatively compact.
\end{theorem}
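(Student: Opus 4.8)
The plan is to prove the two implications separately and to exploit the fact that one of them is already recorded in the excerpt, so that the genuine work is concentrated in the remaining direction.

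For the \emph{sufficiency} part (a relatively compact whole trajectory through $w$ forces $w\in I_{y}$) I would simply invoke Corollary \ref{corF1}. Indeed, if $\nu:\mathbb T\to W$ is a whole trajectory of $\varphi$ with $\nu(0)=w$, with $\nu(\mathbb T)$ relatively compact and $\varphi(t,\nu(s),\sigma(s,y))=\nu(t+s)$ for all $t\ge 0$ and $s\in\mathbb T$, then $\nu$ is precisely a relatively compact full trajectory in the sense of Corollary \ref{corF1} with base point $y_0:=y$; that corollary yields $w=\nu(0)\in I_{y}$. So this direction costs nothing beyond citing the corollary.

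For the \emph{necessity} part I would start from $w\in I_{y}=\omega_{y}(K)$ and build a relatively compact whole trajectory through $w$ lying in the attractor. The forward half is immediate: by the invariance $U(t,y)I_{y}=I_{\sigma(t,y)}$ (Theorem \ref{t2.7.3}, item 2) the points $\nu(t):=\varphi(t,w,y)$ satisfy $\nu(t)\in I_{\sigma(t,y)}$ for every $t\ge 0$. For the backward half I would use invariance once more: since $U(n,\sigma(-n,y))I_{\sigma(-n,y)}=I_{y}\ni w$ for each $n\in\mathbb N$, one may choose $w_{n}\in I_{\sigma(-n,y)}$ with $\varphi(n,w_{n},\sigma(-n,y))=w$, and the curves $t\mapsto \varphi(t+n,w_{n},\sigma(-n,y))$ (defined for $t\ge -n$ and passing through $w$ at $t=0$) all take values in the compact fibers $I_{\sigma(t,y)}$. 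A diagonal extraction over the precompact fibers $I_{\sigma(-k,y)}$, $k\in\mathbb N$, together with the continuity of $\varphi$, then produces a whole trajectory $\nu:\mathbb T\to W$ with $\nu(0)=w$ and $\nu(t)\in I_{\sigma(t,y)}$ for all $t\in\mathbb T$; alternatively, the existence of such a negative continuation can be read off directly from Lemma \ref{l2.7.1}, item (iii), via the representation in items (i)--(ii). Finally $\nu(\mathbb T)\subseteq I=\bigcup_{y\in Y}I_{y}$, which is relatively compact by Theorem \ref{t2.7.3}, item 4, so $\nu(\mathbb T)$ is relatively compact, as required.

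The main obstacle is the backward construction in the necessity part: choosing the preimages $w_{n}$ coherently and verifying that the diagonal limit is a genuine whole trajectory, i.e. that the cocycle identity $\varphi(t,\nu(s),\sigma(s,y))=\nu(t+s)$ survives the passage to the limit and that the pieces glue consistently across the intervals $[-k,\infty)$. Compactness of each fiber $I_{\sigma(-k,y)}$ and continuity of $\varphi$ are exactly what make this work; once the trajectory is obtained, its relative compactness is automatic from the relative compactness of the global attractor $I$.
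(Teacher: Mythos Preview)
The paper does not supply its own proof of Theorem~\ref{t2.7.5}; it is simply quoted from \cite[ChII]{Che_2015}. Your reconstruction is the standard one and is correct.

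One caution about the sufficiency direction: you invoke Corollary~\ref{corF1}, but that corollary is stated in the paper without proof and is, word for word, precisely the sufficiency half of Theorem~\ref{t2.7.5}. In the paper's ordering Corollary~\ref{corF1} precedes Theorem~\ref{t2.7.5} and is presumably meant to follow from the maximality statement Theorem~\ref{t2.7.3F} (adjoin $\{\gamma(s)\}$ to the fibers $I_{\sigma(s,y_0)}$; the enlarged family is still invariant with relatively compact union, so maximality forces $\gamma(0)\in I_{y_0}$). It would be cleaner to give that two-line argument directly rather than cite Corollary~\ref{corF1} as a black box, so that no reader suspects circularity.

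For the necessity direction your diagonal-extraction argument is fine. Note also that your alternative route is actually the shortest: Lemma~\ref{l2.7.1}(iii) already asserts that any $w\in\omega_y(K)=I_y$ admits an entire trajectory, and by the invariance in Lemma~\ref{l2.7.1}(iv) (equivalently Theorem~\ref{t2.7.3}, item~2) that trajectory stays in the fibers $I_{\sigma(t,y)}$, hence in the relatively compact set $I$. So once you commit to Lemma~\ref{l2.7.1}, the diagonal argument is unnecessary.
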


\begin{definition}\label{defF1} A family of subsets $\{I_{y}|\ y\in
Y\}$ ($I_y\subseteq W$ for any $y\in Y$) is said to be upper
semicontinuous if for any $y_0\in Y$ and $y_n\to y_0$ as $n\to
\infty$ we have
$$ \lim\limits_{n\to \infty}\beta(I_{y_n},I_{y_0})= 0.$$
\end{definition}

\begin{lemma}\label{lF1} The following statements hold:
\begin{enumerate}
\item the family $\{I_y|\ y\in Y\}$ is invariant if and only if
the set $J:=\bigcup \{J_y|\ y\in Y\}$, where $J_{y}:=I_{y}\times
\{y\}$, is invariant with respect to skew-product dynamical system
$(X,\mathbb T_{+},\pi)$ ($X:=W\times Y$ and $\pi
:=(\varphi,\sigma)$); \item if $\bigcup \{I_y|\ y\in Y\}$ is
relatively compact, then the family $\{I_y|\ y\in Y\}$ is upper
semicontinuous if and only if the set $J$ is closed in $X$.
\end{enumerate}
\end{lemma}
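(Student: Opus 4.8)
The plan is to prove both equivalences in Lemma~\ref{lF1} by unwinding the definitions of invariance and of the skew-product flow $\pi=(\varphi,\sigma)$, and then to handle the topological part using the conditional-compactness structure of $J$. I would treat the two items separately, as they are logically independent.

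For item~(i), I would simply translate fiberwise invariance into invariance of the total set $J$. By definition $J_y=I_y\times\{y\}$, so a point of $J$ has the form $(w,y)$ with $w\in I_y$. Applying the flow gives $\pi(t,(w,y))=(\varphi(t,w,y),\sigma(t,y))$, which lies in the fiber over $\sigma(t,y)$. Hence $\pi(t,J_y)=\varphi(t,I_y,y)\times\{\sigma(t,y)\}$. The family $\{I_y\}$ being invariant means $\varphi(t,I_y,y)=I_{\sigma(t,y)}$ for all $t\in\mathbb T_+$, $y\in Y$, and this is exactly the statement that $\pi(t,J_y)=J_{\sigma(t,y)}$, i.e. $\pi(t,J)=J$. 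Both directions follow at once because the equality $\varphi(t,I_y,y)=I_{\sigma(t,y)}$ is equivalent, fiber by fiber, to the corresponding equality for $\pi$. This step is essentially bookkeeping and should present no difficulty.

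For item~(ii), assume $I:=\bigcup_y I_y$ is relatively compact, so that $J\subseteq \overline I\times Y$ is conditionally precompact over $Y$. First I would show that upper semicontinuity implies $J$ is closed: if $(w_n,y_n)\in J$ with $(w_n,y_n)\to(w_0,y_0)$, then $y_n\to y_0$ and $w_n\in I_{y_n}$, so $\rho(w_n,I_{y_0})\le\beta(I_{y_n},I_{y_0})\to 0$ by Definition~\ref{defF1}; since $I_{y_0}$ is compact, the limit $w_0$ lies in $I_{y_0}$, whence $(w_0,y_0)\in J$. Conversely, suppose $J$ is closed and, for contradiction, that upper semicontinuity fails at some $y_0$ with $y_n\to y_0$: then $\beta(I_{y_n},I_{y_0})\ge\eps_0>0$, so one may choose $w_n\in I_{y_n}$ with $\rho(w_n,I_{y_0})\ge\eps_0$. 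By relative compactness of $I$ the sequence $w_n$ has a convergent subsequence $w_{n_k}\to w_0$; then $(w_{n_k},y_{n_k})\to(w_0,y_0)\in J$ by closedness, so $w_0\in I_{y_0}$, contradicting $\rho(w_0,I_{y_0})\ge\eps_0$.

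The main obstacle, and the only place where the relative-compactness hypothesis is genuinely used, is the converse direction of item~(ii): without it one cannot extract the convergent subsequence $w_{n_k}\to w_0$ needed to derive the contradiction, since closedness of $J$ alone gives information only about limits of sequences that are already known to converge. I expect the forward direction of (ii) and all of (i) to be routine; the care required is in the selection of the sequence $w_n$ and in invoking compactness of the limit fiber $I_{y_0}$ to place $w_0$ inside it.
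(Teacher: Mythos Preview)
Your proposal is correct and follows essentially the same approach as the paper: the paper dismisses item~(i) as ``evident'' and handles item~(ii) by the same two arguments you give (direct use of $\beta(I_{y_n},I_{y_0})\to 0$ for one direction, and the same contradiction via a convergent subsequence extracted from the relatively compact $I$ for the other). The only cosmetic differences are that you spell out item~(i) more carefully and reverse the order of the two implications in item~(ii).
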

\begin{proof} The first statement is evident.

Suppose that the set $J\subseteq X$ is closed. If we suppose that
the family $\{I_{y}|\ y\in Y\}$ is not upper semicontinuous, then
there are $\varepsilon_0>0$, $y_0\in Y$ and sequences
$\{y_n\}\subset Y$ and $\{u_n\}\subset W $ such that $y_n\to y_0$
as $n\to \infty$, $u_n\in I_{y_n}$ and
\begin{equation}\label{eqF1}
\rho(u_n,I_{y_0})\ge \varepsilon_{0}.
\end{equation}
Since $\bigcup \{I_y|\ y\in Y\}$ is relatively compact, then
without loss of generality we can suppose that the sequence
$\{u_n\}$ is convergent. Denote by $u_0:=\lim\limits_{n\to
\infty}u_n$ and passing into limit as $n\to \infty$ in inequality
$(\ref{eqF1})$ we obtain $u_0\notin I_{y_0}$. On the other hand we
have $(u_n,y_n)\in J_{y_n}\subseteq J$ for any $n\in \mathbb N$
and since the set $J$ is closed and $(u_n,y_n)\to (u_0,y_0)$ as
$n\to \infty$, then $(u_0,y_0)\in J$ and, consequently, $u_o\in
I_{y_0}$. The obtained contradiction proves our statement.

Let now the family $\{I_{y}\}$ upper semicontinuous and
$(\bar{u},\bar{y})\in \overline{J}$. Then there exists a sequence
$\{(u_n,y_n)\in J\}$ such that $(u_n,y_n)\to (\bar{u},\bar{y})$.
Since $u_n\in I_{y_n}$ and $\{I_{y}|\ y\in Y\}$ is upper
semicontinuous, then $u_0\in I_{y_0}$ and, consequently,
$(u_0,y_0)\in J_{y_0}\subseteq J$. Thus the set $J$ is closed.
\end{proof}

\section{I. U. Bronshtein's conjecture for non-autonomous dynamical
systems}\label{S4}

\begin{example}\label{exS1*} {\em (\emph{Bebutov's dynamical system})
Let $X,W$ be two metric space. Denote by $C(\mathbb T\times W,X)$
the space of all continuous mappings $f:\mathbb T\times W\mapsto
X$ equipped with the compact-open topology and $\sigma$ be the
mapping from $\mathbb T\times C(\mathbb T\times W,X)$ into
$C(\mathbb T\times W,X)$ defined by the equality
$\sigma(\tau,f):=f_{\tau}$ for all $\tau\in\mathbb T$ and $f\in
C(\mathbb T\times W,X)$, where $f_{\tau}$ is the
$\tau$-translation (shift) of $f$ with respect to variable $t$,
i.e., $f_{\tau}(t,x)=f(t+\tau,x)$ for all $(t,x)\in \mathbb
T\times W$. Then \cite[Ch.I]{Che_2015} the triplet $(C(\mathbb
T\times W,X),\mathbb T,\sigma)$ is a dynamical system on
$C(\mathbb T\times W,X)$ which is called a \emph{shift dynamical
system} (\emph{dynamical system of translations} or
\emph{Bebutov's dynamical system}).

Recall that the function $\varphi\in C(\mathbb T,\mathbb R^{d})$
(respectively, $f\in C(\mathbb T \times \mathbb R^{d},\mathbb
R^{n})$) possesses the property $(A)$, if the motion
$\sigma(\cdot,\varphi)$ (respectively, $\sigma(\cdot,f)$)
generated by the function $\varphi$ (respectively, $f$) possesses
this property in the dynamical system $(C(\mathbb T,\mathbb
R^{d}), \mathbb T,\sigma)$ (respectively, $(C(\mathbb T\times
\mathbb R^{d},\mathbb R^{d}), \mathbb T,\sigma)$).

In the quality of the property $(A)$ there can stand stability in
the sense of Lagrange (st. $L$), uniform stability (un. st.
$\mathcal L^+$) in the sense of Lyapunov, periodicity, almost
periodicity, asymptotical almost periodicity and so on.

For example, a function $f\in C(\mathbb{T}\times \mathbb
R^{d},\mathbb R^{d})$ is called almost periodic (respectively,
recurrent etc) in $t\in \mathbb{R}$ uniformly with respect to
(w.r.t.) $w$ on every compact subset from $\mathbb R^{d}$, if the
motion $\sigma(\cdot,f)$ is almost periodic (respectively,
recurrent) in the dynamical system $(C(\mathbb{T}\times \mathbb
R^{d},\mathbb R^{d}),\mathbb{T},\sigma)$.}
\end{example}

\textbf{I. U. Bronshtein's conjecture for cocycles.} Suppose that
$\langle W,\varphi,(Y,\mathbb T,\sigma)\rangle$ is a cocycle under
$(Y,\mathbb T,\sigma)$ with the fiber $W$ and the following
conditions are fulfilled:
\begin{enumerate}
\item the cocycle $\varphi$ admits a compact global attractor
$\textbf{I}:=\{I_{y}|\ y\in Y\}$; \item the cocycle $\varphi$ is
positively uniformly Lyapunov stable, i.e., for any $\varepsilon
>0$ and nonempty compact subset $K\subseteq W$ there exists a
positive number $\delta =\delta(\varepsilon,K)$ such that
$\rho(u_1,u_2)<\delta$ ($u_1,u_2\in K$) implies
$\rho(\varphi(t,u_1,y),\varphi(t,u_2,y))<\varepsilon$ for any
$t\ge 0$ and $y\in Y$;  \item the dynamical system $(Y,\mathbb
T,\sigma)$ is minimal and Bohr almost periodic.
\end{enumerate}
Then for any $y\in Y$ there exists at least one point $w_{y}\in
I_{y}$ such that the motion $\varphi(t,u_y,y)$ is defined on
entire axis $\mathbb T$ and it is Bohr almost periodic.

One of the main goal of this paper is a positive answer to I. U.
Bronshtein's conjecture for monotone nonautonomous dynamical
systems (Corollary \ref{corPM1}).

Let $\langle \mathbb R^d,\varphi,(Y,\mathbb T,\sigma)\rangle$ be a
cocycle over $(Y,\mathbb T,\sigma)$ with fiber $\mathbb R^d$.

\begin{theorem}\label{thBM1} Assume that the cocycle $\varphi$
\begin{enumerate}
\item is monotone; \item admits a compact global attractor
$\textbf{I}:=\{I_{y}|\ y\in Y\}$; \item is positively uniformly
Lyapunov stable.
\end{enumerate}
Then the following statements hold:
\begin{enumerate}
\item if $y_0\in \omega_{y_0}$, then there exists a point
$a_{y_0}\in I_{y_0}$ such that the full trajectory $\gamma_{y_0}$
with $x_0:=\gamma_{y_0}(0)=(a_{y_{0}},y_0)$ is comparable by
character of recurrence with $y _0$; \item if $y_0$ is strongly
Poisson stable, then the exists a point $a_{y_0}\in I_{y_0}$ such
that the full trajectory $\gamma_{y_0}$ with
$x_0:=\gamma_{y_0}(0)=(a_{y_{0}},y_0)$ is strongly comparable by
character of recurrence with $y _0$.
\end{enumerate}
\end{theorem}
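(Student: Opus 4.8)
The plan is to reduce Theorem~\ref{thBM1} to the already-established Theorems~\ref{thM1} and~\ref{thM2} by showing that the three stated hypotheses (monotonicity, existence of a compact global attractor, and positive uniform Lyapunov stability) force the standing assumptions (C1)--(C3) used there, together with the positive uniform stability of the relevant $\omega$-limit sets. Once (C1)--(C3) are in force, statement~(1) follows from Theorem~\ref{thM1} and statement~(2) from Theorem~\ref{thM2}; the point $a_{y_0}$ is read off as the fibre component of the point $\gamma_{y_0}$ produced by Lemma~\ref{lAPS2}.

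First I would verify (C1) and (C3). Condition (C3) is literally hypothesis~(1). Condition (C1) is automatic here because the fibre is $W=\mathbb R^d$ equipped with the cone order: for any compact $K\subseteq X=\mathbb R^d\times Y$ and any $y\in Y$, the slice $K_y=h^{-1}(y)\cap K$ is a compact subset of $\mathbb R^d\times\{y\}$, and a compact subset of $\mathbb R^d$ always has an infimum $\alpha(K_y)$ and supremum $\beta(K_y)$ in the sense defined in Section~\ref{S2}. Next, for (C2), the existence of a compact global attractor makes the cocycle compactly dissipative (Definitions~\ref{def2.7.2} and~\ref{defGA}), so each forward orbit $\{\varphi(t,u,y):t\ge 0\}$ is eventually absorbed into a fixed compact subset of $\mathbb R^d$ and, being the union of that tail with the continuous image of a compact time-interval, is bounded; hence $\Sigma^{+}_{x}$ is conditionally precompact for every $x$. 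For the positive uniform stability of $\omega_x$, I would use that $\omega_x$ projects into the compact attractor, so its fibre components lie in a fixed compact $K\subseteq\mathbb R^d$; applying the positive uniform Lyapunov stability of the cocycle with this $K$ (enlarged to a compact neighbourhood to absorb nearby initial points) supplies the $\delta(\varepsilon)$ required in the definition of uniform stability of the forward orbit, uniformly over the base $Y$. This establishes (C2), and in particular $\omega_{x_0}$ is positively uniformly stable for the relevant $x_0$.

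With (C1)--(C3) in hand, I would fix an arbitrary $x_0\in X_{y_0}$ and put $K:=\omega_{x_0}$. Since $y_0\in\omega_{y_0}$ (which holds under the hypothesis of~(1), and also under strong Poisson stability in~(2)), Lemma~\ref{lAPS2} supplies the single point $\gamma_{y_0}\in\omega_{\alpha_{y_0}}\cap X_{y_0}$. Writing $\gamma_{y_0}=(a_{y_0},y_0)$, this $X$-point lies in an $\omega$-limit set and therefore generates a relatively compact full trajectory of the skew-product; its $\mathbb R^d$-projection is then a relatively compact full trajectory of the cocycle $\varphi$ over $y_0$, so Corollary~\ref{corF1} forces $a_{y_0}\in I_{y_0}$. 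Theorem~\ref{thM1} now gives that $\gamma_{y_0}$ is comparable by the character of recurrence with $y_0$, proving~(1), and Theorem~\ref{thM2} gives strong comparability under strong Poisson stability, proving~(2).

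The main obstacle I expect is the derivation in (C2) of the positive uniform stability of $\omega_x$ from the cocycle's positive uniform Lyapunov stability: one must match the fibrewise Lyapunov estimate (valid on compact subsets of $\mathbb R^d$, uniformly in $y$ and $t$) with the set-theoretic definition of uniform stability of a forward orbit in the skew-product, taking care that nearby initial points remain in a compact set and that the estimate stays genuinely uniform over the \emph{noncompact} base $Y$. The remaining identification of $a_{y_0}$ is routine given Corollary~\ref{corF1} and the cited theorems.
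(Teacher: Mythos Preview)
Your proposal is correct and follows essentially the same route as the paper: verify that hypotheses (1)--(3) yield conditions (C1)--(C3), apply Theorem~\ref{thM1} (respectively Theorem~\ref{thM2}) to obtain the point $\gamma_{y_0}=(a_{y_0},y_0)$ with the required comparability, and then invoke Corollary~\ref{corF1} to place $a_{y_0}$ in $I_{y_0}$. The paper's proof is in fact terser than yours---it simply asserts that (C1)--(C3) hold without the justification you supply---so your more careful verification (in particular of the uniform stability of $\omega_{x_0}$) only fills in what the paper leaves implicit.
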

\begin{proof}
Let $x_0=(u_{y_0},y_0)$, where $u_{y_0}$ is an arbitrary point
from $W$. Since the cocycle $\varphi$ is compact dissipative, then
the semitrajectory
$\Sigma^{+}_{x_0}:=\{(\varphi(t,u_{y_0},y_{y_0}),\sigma(t,y_0))|\
t\in\mathbb T_{+}\}$ is conditionally precompact. It easy to check
that under the conditions of Theorem the conditions $(C1)-(C3)$
are fulfilled. By Theorem \ref{thM1} there exists at least one
point $a_{y_0}\in W$ such that the full trajectory $\gamma_{y_0}$
with $\gamma_{y_0}(0)=(a_{y_{0}},y_0)$ is comparable by character
of recurrence with $y _0$. According to Corollary \ref{corF1}
$a_{y_0}\in I_{y_0}$.

The second statement of Theorem can be proved using the same
argument as in the proof of the first statement but instead of
Theorem \ref{thM1} we need to apply Theorem \ref{thM2}.
\end{proof}

\begin{coro}\label{corPM1} Under the conditions of Theorem
\ref{thBM1} the following statements take place:
\begin{enumerate}
\item if the point $y_0$ is $\tau$-periodic (respectively, Levitan
almost periodic, almost recurrent, almost automorphic, recurrent,
Poisson stable), then there exist a point $a_{y_0}\in I_{y_0}$
such that the full trajectory $\gamma_{y_0}=(a_{y_0},y_{0})$ is
so; \item if the point $y_0$ is $\tau$-periodic (respectively,
quasi periodic, Bohr almost periodic, almost automorphic,
recurrent, pseudo recurrent and Lagrange stable), then there exist
a point $a_{y_0}\in I_{y_0}$ such that the point
$x_0:=(a_{y_0},y_0)$ is so.
\end{enumerate}
\end{coro}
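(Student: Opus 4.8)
The plan is to read Corollary \ref{corPM1} as a direct consequence of Theorem \ref{thBM1} combined with the recurrence-transfer results of B.~A.~Shcherbakov (Theorems \ref{thShc2.1}, \ref{tS2} and \ref{thShc6}). The first preliminary observation is that each recurrence property appearing in the two lists forces $y_0$ to satisfy the hypothesis of the corresponding part of Theorem \ref{thBM1}. For statement (1), every listed property ($\tau$-periodic, Levitan almost periodic, almost recurrent, almost automorphic, recurrent, Poisson stable) entails in particular $y_0\in\omega_{y_0}$, i.e. positive Poisson stability. For statement (2), each listed property makes $y_0$ strongly Poisson stable: for the $\tau$-periodic, quasi periodic, Bohr almost periodic, almost automorphic and recurrent cases the hull $H(y_0)$ is minimal, so every point of it returns to itself, and for the last case this is exactly Remark \ref{remP2}.2 applied to the pseudo recurrent point $y_0$. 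Verifying these implications is routine but should be recorded case by case.

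For statement (1) I would then invoke Theorem \ref{thBM1}(1) to produce $a_{y_0}\in I_{y_0}$ such that the full trajectory $\gamma_{y_0}$ with $x_0=(a_{y_0},y_0)$ is comparable with $y_0$ by the character of recurrence. Theorem \ref{thShc2.1} transfers the properties $\tau$-periodic, Levitan almost periodic, almost recurrent and Poisson stable from $y_0$ directly to $x_0$. The two remaining cases, almost automorphic and recurrent, are \emph{not} listed in Theorem \ref{thShc2.1}, and here the extra ingredient is that $a_{y_0}\in I_{y_0}$ lies in the compact global attractor, so $\gamma_{y_0}(\mathbb T)\subseteq I$ is relatively compact and $x_0$ is automatically stable in the sense of Lagrange with compact hull. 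Since almost automorphy is Levitan almost periodicity together with st.$L$, and recurrence is almost recurrence together with compactness of $H(x_0)$, combining the conclusion of Theorem \ref{thShc2.1} with this compactness supplied by the attractor yields the desired conclusions.

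For statement (2) I would apply Theorem \ref{thBM1}(2) to obtain $a_{y_0}\in I_{y_0}$ with $x_0=(a_{y_0},y_0)$ strongly comparable with $y_0$, i.e. $\mathfrak M_{y_0}\subseteq\mathfrak M_{x_0}$. Because every property in the second list makes $y_0$ Lagrange stable, Theorem \ref{thShc6} upgrades this inclusion into the assertion that $x_0$ is Lagrange stable and \emph{uniformly} comparable with $y_0$ by the character of recurrence. Theorem \ref{tS2} then transfers recurrence, Bohr almost periodicity (hence quasi periodicity and $\tau$-periodicity as special cases) and almost automorphy from $y_0$ to $x_0$. The pseudo recurrent and Lagrange stable case is handled separately: $x_0$ is Lagrange stable, and the inclusion $\mathfrak M_{y_0}\subseteq\mathfrak M_{x_0}$ together with the strong Poisson stability of $y_0$ should force $p\in\omega_p$ for every $p\in H(x_0)$, so $x_0$ is strongly Poisson stable in the sense of Definition \ref{defSP1} and therefore pseudo recurrent by Remark \ref{remP2}.

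The step I expect to be the main obstacle is precisely the handling of the properties that are not listed verbatim in the Shcherbakov transfer theorems, namely almost automorphy and recurrence in part (1) and pseudo recurrence in part (2). For each of these, comparability (or uniform comparability) alone governs only the ``recurrence half'' of the definition, and one must lean on the fact that the chosen trajectory lives in the compact global attractor to obtain the complementary ``compactness/stability half'' for free. The pseudo recurrent case is the most delicate, since it requires propagating strong Poisson stability along the fibres through the inclusion $\mathfrak M_{y_0}\subseteq\mathfrak M_{x_0}$ rather than reading it off directly from a single transfer theorem.
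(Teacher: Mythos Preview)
Your approach is essentially the same as the paper's, which proves the corollary in a single sentence: ``This statement follows from the Theorems \ref{thBM1}, \ref{thShc2.1} and \ref{tS2}.'' You are simply unpacking that citation with more care, verifying case by case that the hypotheses of the relevant part of Theorem \ref{thBM1} are met and that each recurrence property is actually covered by one of the transfer theorems.

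The only substantive addition on your side is the explicit use of Theorem \ref{thShc6} to pass from the strong comparability $\mathfrak M_{y_0}\subseteq\mathfrak M_{x_0}$ delivered by Theorem \ref{thBM1}(2) to the uniform comparability required by Theorem \ref{tS2}; the paper does not cite \ref{thShc6}, but this bridging step is indeed needed, so your version is the more accurate one. Your treatment of the cases not literally listed in \ref{thShc2.1} or \ref{tS2} (almost automorphy and recurrence in part (1), pseudo recurrence in part (2)) by combining the transferred ``recurrence half'' with Lagrange stability coming from the compact attractor and the compact hull $H(y_0)$ is also correct and fills in detail that the paper leaves implicit. None of this constitutes a different route; it is the same proof, written out.
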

\begin{proof}
This statement follows from the Theorems \ref{thBM1},
\ref{thShc2.1} and \ref{tS2}.
\end{proof}

\begin{remark}\label{remBM1} Corollary \ref{corPM1}
give as the positive answer for I. U. Bronshtein's conjecture for
monotone Bohr almost periodic systems.
\end{remark}

\section{Applications}\label{S5}

\subsection{Dissipative Cocycles}

Let $Y $ be a complete metric space (generally speaking
noncompact), $ \langle \mathbb R^d, \varphi,$ $(Y, \mathbb
T,\sigma)\rangle $ be a cocycle on the state space $\mathbb R^d$
and $(X,\mathbb T _+,\pi )$ be the corresponding skew-product
dynamical system, where $X:=\mathbb R ^d\times Y$ and $\pi
:=(\varphi ,\sigma)$.

\begin{definition}\label{def9.5.1} The cocycle
$ \langle \mathbb R^d, \varphi, (Y, \mathbb T,\sigma)\rangle $ is
said to be dissipative if for any $y \in Y $ there is a positive
number $r_y$ such that
$$
\limsup \limits _{t \to +\infty} \vert \varphi (t,u,y)\vert <r_y
$$
for any $y \in Y$ and $u\in \mathbb R^d,$ i.e., for all $u\in
\mathbb R ^d $ and $ y \in Y$ there exists a positive number
$L(u,y)$ such that $
 \vert \varphi (t,u,y)\vert <r_y
$ for any $t \ge L(u,y).$
\end{definition}

\begin{definition}\label{def9.5.2}
The cocycle $ \langle E, \varphi, (, \mathbb T,\sigma)\rangle $ is
said to be uniformly dissipative\index{uniformly dissipative} if
there exists a positive number $r$ ($r$ is not depend upon $y \in
Y$) such that for any $R>0$ there is a positive number $L(R)$ such
that $ \vert \varphi (t,u,y)\vert <r $ for all $y \in Y$ and
$\vert u \vert \le R$ and $ t \ge L(R).$
\end{definition}

\begin{remark}\label{remGA1} 1. If the space $E$ is
finite-dimensional, then the uniformly dissipative is compactly
dissipative.

2. If the space $E$ is finite-dimensional ($E=\mathbb R^d$) and
$Y$ is compact, then the dissipative system is uniformly
dissipative \cite[ChII]{Che_2015}.
\end{remark}

\begin{theorem}\label{t3.1.1}\cite[ChIII]{Che_2015} Let $Y$ be a compact and
$\langle R^{d}, \varphi, (Y,\mathbb{T},\sigma)\rangle $ be a
cocycle over the dynamical system $(Y,\mathbb{T},\sigma)$ with the
fiber $\mathbb R^d$. Then the following statements are equivalent:
\begin{enumerate}
\item[1.] There exists a positive number $R$ such that
$$
\limsup\limits_{t\to+\infty}\vert\varphi(t,u,y)\vert<R
$$
for all $u\in \mathbb R^d$ ¨ $y\in Y$. \item[2.] There is a
positive number $r_1$ such that for all $u\in \mathbb R^d$ and
$y\in Y$ there exists $\tau=\tau(u,y)>0$ for which
$\vert\varphi(\tau,u,y)\vert<r_1$. \item[3.] There is a positive
number $r_2$ such that
$$
\liminf\limits_{t\to+\infty}\vert\varphi(t,u,y)\vert<r_2
$$
for all $u\in \mathbb R^d$ and $y\in Y$. \item[4.] There exists a
positive number $R_0$ and for all $R>0$ there is $l(R)>0$ such
that $\vert\varphi(t,u,y)\vert\le R_0$ for all $t\ge l(R)$, $u\in
\mathbb R^d$, $\vert u\vert\le R$ and $y\in Y$.
\end{enumerate}
\end{theorem}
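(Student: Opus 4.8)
The plan is to prove the four statements equivalent by establishing the cycle $1 \Rightarrow 3 \Rightarrow 2 \Rightarrow 4 \Rightarrow 1$, three links of which are immediate from the definitions. For $1 \Rightarrow 3$ I would simply use $\liminf \le \limsup$ and put $r_2 := R$. For $3 \Rightarrow 2$, the inequality $\liminf_{t\to+\infty}|\varphi(t,u,y)| < r_2$ produces a sequence $t_n \to +\infty$ along which $|\varphi(t_n,u,y)|$ is eventually $<r_2$, so one takes such a $t_n$ as $\tau(u,y)$ and sets $r_1 := r_2$. For $4 \Rightarrow 1$, applying statement 4 with $R := |u|$ gives $|\varphi(t,u,y)| \le R_0$ for all $t \ge l(R)$, whence $\limsup_{t\to+\infty}|\varphi(t,u,y)| \le R_0 < R_0 + 1 =: R$, with the same $R$ for every $(u,y)$. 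Thus the whole content of the theorem is concentrated in the single implication $2 \Rightarrow 4$.

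The heart of the matter is a uniform absorption-time lemma, where both the compactness of $Y$ and the finite dimension of the fiber enter decisively. I would prove: for every compact $\mathcal K \subset \mathbb R^d$ there is a time $T(\mathcal K) > 0$ such that every trajectory issuing from $\mathcal K \times Y$ meets the open ball $B(0,r_1)$ in the fiber at some positive time not exceeding $T(\mathcal K)$. The proof is by contradiction via compactness and the joint continuity of $\varphi$ (Definition \ref{def2.1}): set $g(u,y) := \inf\{t>0 : |\varphi(t,u,y)| < r_1\}$, which is finite for every $(u,y)$ by statement 2. If $g$ were unbounded on the compact set $\mathcal K \times Y$, choose $(u_n,y_n)$ with $g(u_n,y_n) \to +\infty$ and, after passing to a subsequence, $(u_n,y_n) \to (u_*,y_*) \in \mathcal K \times Y$. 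Statement 2 supplies $\tau_* > 0$ with $|\varphi(\tau_*,u_*,y_*)| < r_1$; continuity then forces $|\varphi(\tau_*,u_n,y_n)| < r_1$ for all large $n$, so $g(u_n,y_n) \le \tau_*$, a contradiction. Hence $T(\mathcal K) := \sup_{\mathcal K\times Y} g + 1 < +\infty$. Here it is essential that $\mathcal K \times Y$ be compact, which is exactly what finite-dimensionality of the fiber together with compactness of $Y$ provides.

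With the lemma in hand I would build a forward-invariant absorbing ball and read off statement 4. Applying the lemma to $\mathcal K_0 := \overline B(0,r_1)$ yields a uniform return time $T_1 := T(\mathcal K_0)$, and by continuity on the compact set $\overline B(0,r_1)\times Y \times [0,T_1]$ the radius $R_0 := \sup\{|\varphi(t,v,y)| : |v|\le r_1,\ y\in Y,\ t\in[0,T_1]\}$ is finite and $\ge r_1$. A trajectory starting in $\overline B(0,r_1)\times Y$ stays in $\overline B(0,R_0)$ up to its return time $\le T_1$, lies in $B(0,r_1)$ again at that moment, and—invoking the cocycle identity of Definition \ref{def2.1}—repeats the cycle; by induction it never leaves $\overline B(0,R_0)$, so $\overline B(0,R_0)\times Y$ is forward invariant for the fiber dynamics uniformly in $y$. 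Finally, for arbitrary $R>0$ I apply the lemma to $\mathcal K := \overline B(0,R)$ and set $l(R) := T(\overline B(0,R))$: every trajectory with $|u|\le R$ enters $B(0,r_1)$ by time $l(R)$ and thereafter cannot escape $\overline B(0,R_0)$, giving $|\varphi(t,u,y)| \le R_0$ for all $t \ge l(R)$, which is precisely statement 4 (Definition \ref{def9.5.2}). The main obstacle is the uniform absorption-time lemma—the passage from the merely pointwise, non-uniform entrance times $\tau(u,y)$ of statement 2 to times uniform over compact sets of initial data—which rests essentially on compactness of the fiber–base product; everything else is bookkeeping with the cocycle identity, and the same reasoning applies verbatim when $\mathbb T = \mathbb Z$, where joint continuity in time is automatic.
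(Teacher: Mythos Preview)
Your argument is correct, but there is nothing in this paper to compare it against: Theorem~\ref{t3.1.1} is stated here without proof, merely quoted from \cite[ChIII]{Che_2015}. The route you take---three trivial implications and the single substantive step $2\Rightarrow 4$, handled by a uniform absorption-time lemma (compactness of $\mathcal K\times Y$ plus joint continuity of $\varphi$) followed by construction of a forward-invariant absorbing ball---is the standard one and is essentially what appears in the cited monograph.

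One minor point worth tightening in the step ``by induction it never leaves $\overline B(0,R_0)$'': in the continuous-time case the successive return times $\tau_k\in(0,T_1]$ are not a~priori bounded away from zero, so one should check that the partial sums $s_k$ exhaust $[0,\infty)$. The quickest fix is a last-exit argument: if $|\varphi(t_0,v,y)|>R_0$ for some $t_0$, set $s=\max\{s'\in[0,t_0]:|\varphi(s',v,y)|\le r_1\}$; the lemma gives $\tau\in(0,T_1]$ with $|\varphi(s+\tau,v,y)|<r_1$, maximality of $s$ forces $s+\tau>t_0$, hence $t_0-s\in[0,T_1]$ and the definition of $R_0$ yields the contradiction. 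In discrete time no issue arises since each $\tau_k\ge 1$. This is a routine wrinkle in an otherwise sound sketch.
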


Note that every condition 1.-4. that figures in Theorem
\ref{t3.1.1} is equivalent to the dissipativity of the
non-autonomous dynamical system $\langle (X,\mathbb{T}_{+},\pi),$
$ (Y,\mathbb{T},\sigma), h\rangle $ associated by the cocycle
$\langle \mathbb R^d, \varphi,$ $ (Y,\mathbb{T},\sigma)\rangle $
over $(Y,\mathbb{T},\sigma)$ with the fiber $\mathbb R^d$.

\subsection{Ordinary Differential Equations}

We will give below an example of a skew-product dynamical system
which plays an important role in the study of non-autonomous
differential equations.

\begin{example}\label{ex3.1.2}
{\rm  Consider the differential equation
\begin{equation}\label{eq3.1.1}
u'=f(t,u),
\end{equation}
where $f\in C(\mathbb{R}\times \mathbb R^d,\mathbb R^d)$. Along
with the equation (\ref{eq3.1.1}) we consider its $H$-class
\cite{bro75},\cite{Dem67}, \cite{Lev-Zhi},
\cite{scher72},\cite{Sch85}, i.e., the family of the equations
\begin{equation}
v'=g(t,v),\label{eq3.1.2}
\end{equation}
where $g\in H(f)=\overline{\{f_{\tau}:\tau\in \mathbb{R}\}}$ and
$f_{\tau}(t,u)=f(t+\tau,u)$, where the bar indicating closure in
the compact-open topology.

\textbf{Condition (A1)}. The function $f\in C(\mathbb{R}\times
\mathbb R^d,\mathbb R^d)$ is said to be regular if for every
equation (\ref{eq3.1.2}) the conditions of existence, uniqueness
and extendability on $\mathbb{R}_{+}$ are fulfilled.

We will suppose that the function $f$ is regular. Denote by
$\varphi(\cdot,v,g)$ the solution of (\ref{eq3.1.2}) passing
through the point $v\in \mathbb R^d$ for $t=0$. Then the mapping
$\varphi:\mathbb{R}_{+}\times \mathbb R^d\times H(f)\to \mathbb
R^d$ satisfies the following conditions (see, for example,
\cite{bro75},\cite{Sell67.1},\cite{Sell67.2}):
\begin{enumerate}
\item[$1)$] $\varphi(0,v,g)=v$ for all $v\in \mathbb R^d$ and
$g\in H(f)$; \item[$2)$]
$\varphi(t,\varphi(\tau,v,g),g_{\tau})=\varphi(t+\tau,v,g)$ for
each $ v\in \mathbb R^d$, $g\in H(f)$ and $t,\tau \in
\mathbb{R}_{+}$; \item[$3)$] $\varphi:\mathbb{R}_{+}\times
E^n\times H(f)\to \mathbb R^d$ is continuous.
\end{enumerate}

Denote by $Y:=H(f)$ and $(Y,\mathbb{R},\sigma)$ a dynamical system
of translations on $Y$, induced by the dynamical system of
translations $(C(\mathbb{R}\times \mathbb R^d,\mathbb
R^d),\mathbb{R},\sigma)$. The triple $\langle \mathbb R^d,\varphi,
(Y,\mathbb{R},\sigma)\rangle $ is a cocycle over
$(Y,\mathbb{R}_{+},\sigma)$ with the fiber $\mathbb R^d$. Hence,
the equation (\ref{eq3.1.1}) generates a cocycle $\langle \mathbb
R^d,\varphi, (Y,\mathbb{R},\sigma)\rangle $ and the non-autonomous
dynamical system $\langle (X,\mathbb{R}_{+},\pi),\,
(Y,\mathbb{R},\sigma), h\rangle $, where $X:= \mathbb R^d\times
Y$, $\pi:=(\varphi,\sigma)$ and $h:=pr_2:X\to Y$.}
\end{example}

\begin{definition}\label{def3.1.1}
Recall that the equation (\ref{eq3.1.1}) is called dissipative
\cite{Dem67}, \cite{Pliss66}, \cite{Yosh55}, \cite{Yosh59}, if for
all $t_0\in\mathbb{R}$ and $x_0\in E^n$ there exists a unique
solution $x(t;x_0,t_0)$ of the equation (\ref{eq3.1.1}) passing
through the point $(x_0,t_0)$ and if there exists a number $R>0$
such that $\lim\limits_{t\to+\infty}\sup\vert x(t;x_0,t_0)\vert<R$
for all $x_0\in \mathbb R^d$ and $t_0\in \mathbb{R}$. In other
words, for every solution $x(t;x_0,t_0)$ there is an instant
$t_1={t_0}+l(t_0,x_0)$, such that $\vert x(t;x_0,t_0)\vert<R$ for
any $t\ge t_1$. If for any $r>0$ the number $l(t_0,x_0)$ can be
chosen independent on $t_0$ and $x_0$ with $|x_0|\le r$, then the
equation (\ref{eq3.1.1}) is called uniformly dissipative
\cite{Dem67}.
\end{definition}

Below we will establish the relation between the dissipativity of
the equation\index{dissipativity of the equation} (\ref{eq3.1.1})
and the dissipativity of the non-autonomous dynamical system
generated by the equation (\ref{eq3.1.1}).

\begin{lemma}\label{lDE1} Suppose that the function $f\in C(\mathbb R\times \mathbb R^{d},\mathbb
R^{d})$ is regular. If equation (\ref{eq3.1.1}) is uniformly
dissipative, then the cocycle $\varphi$ generated by equation
(\ref{eq3.1.1}) is also uniformly dissipative.
\end{lemma}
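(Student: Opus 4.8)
The plan is to transfer the uniform dissipativity hypothesis from equation (\ref{eq3.1.1}) to the cocycle by first establishing the estimate on the dense set of translations $\{f_{\tau}:\ \tau\in\mathbb R\}\subset Y$ and then passing to the closure $Y=H(f)$. The starting point is the elementary translation identity
\begin{equation}
\varphi(t,u,f_{\tau})=x(t+\tau;u,\tau)\qquad (t\ge 0,\ u\in\mathbb R^d,\ \tau\in\mathbb R),\nonumber
\end{equation}
which holds because both sides solve $v'=f(t+\tau,v)$ and take the value $u$ at $t=0$; uniqueness is guaranteed by the regularity of $f$ (Condition (A1)).

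First I would fix the two radii separately, since this is the key to the argument. Let $R_0>0$ be the universal bound from Definition \ref{def3.1.1} (uniform dissipativity of the equation) and set $r:=R_0+1$; this $r$ is manifestly independent of $y$ and will be the constant witnessing uniform dissipativity of the cocycle in Definition \ref{def9.5.2}. Given an arbitrary radius $R>0$, let $l(R)>0$ be the time furnished by uniform dissipativity of the equation, so that $|x(s;x_0,t_0)|<R_0$ for all $s\ge t_0+l(R)$ whenever $|x_0|\le R$, uniformly in $t_0\in\mathbb R$. I would then set $L(R):=l(R)$.

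Second, on the dense subset the translation identity gives the estimate directly: for $y=f_{\tau}$, $|u|\le R$ and $t\ge L(R)$ we have $t+\tau\ge \tau+l(R)$, whence $|\varphi(t,u,f_{\tau})|=|x(t+\tau;u,\tau)|<R_0<r$, and this bound is uniform in $\tau\in\mathbb R$. Finally, to reach the full base $Y=H(f)$ I would fix $y\in Y$, $|u|\le R$ and $t\ge L(R)$, choose $\tau_n$ with $f_{\tau_n}\to y$ in the compact-open topology, and use the joint continuity of the cocycle (condition $3)$ of Example \ref{ex3.1.2}), which yields $\varphi(t,u,f_{\tau_n})\to\varphi(t,u,y)$. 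Passing to the limit in the strict bound on the dense set gives $|\varphi(t,u,y)|\le R_0<r$, and hence $|\varphi(t,u,y)|<r$ for all $y\in Y$, $|u|\le R$ and $t\ge L(R)$, which is precisely uniform dissipativity of $\varphi$. No compactness of $Y$ is needed, only density of the translations and continuity of $\varphi$.

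The one delicate point is exactly this passage to the closure: limits preserve only non-strict inequalities, so one cannot retain the constant $R_0$ itself in the cocycle estimate. Separating the roles of the two radii — applying $R_0$ on the dense set and reserving the strictly larger $r=R_0+1$ for the final cocycle bound — removes this obstruction and is what makes the whole argument go through with a single $y$-independent constant.
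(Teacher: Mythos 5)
Your proof is correct and follows essentially the same route as the paper's: the translation identity $\varphi(t,u,f_{\tau})=x(t+\tau;u,\tau)$, the uniform bound on the dense set of translates, and a passage to the limit over $f_{\tau_n}\to g$ in $H(f)$ using continuity of the cocycle. Your explicit separation of the two radii (using $r=R_0+1$ so that the non-strict inequality obtained in the limit still yields the strict bound required by Definition \ref{def9.5.2}) is a small but genuine improvement on the paper's own write-up, which ends with $|\varphi(t,v,g)|\le R$ and leaves that point implicit.
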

\begin{proof}
Let (\ref{eq3.1.1}) be uniformly dissipative, then there exists a
positive number $R$ such that for any $r>0$ we can choose
$l=l(r)>0$ so that
\begin{equation}\label{eqUD1}
|x(t;t_0,x_0)|<R
\end{equation}
for any $t\ge t_0+l(r)$. If $g\in H(f)$, then there exists a
sequence $\{\tau_n\}\subset \mathbb R$ such that $f_{\tau_{n}}\to
g$. Since $\varphi(t,x_0,f_{\tau})=x(t+\tau;\tau,x_0)$ for any
$t\in\mathbb R,$ $\tau\in\mathbb R$ and $x_0\in\mathbb R^{d}$,
then we have
\begin{equation}\label{eqUD2}
|\varphi(t,x_0,f_{\tau_{n}})|=|x(t+\tau_{n};\tau_{n},x_0)|<R
\end{equation}
for any $t\ge 0$, $|x_0|\le r$ and $n\in\mathbb N$. Passing in
limit in (\ref{eqUD2}) as $n\to \infty$ we obtain
$|\varphi(t,v,g)|\le R$ for any $t\le l(r)$, $|x_0|\le r$ and
$t\ge l(r)$.
\end{proof}

\begin{lemma}\label{l3.1.3}\cite[ChIII]{Che_2015}  Let $f\in C(\mathbb{R}\times E^n,E^n)$
be regular. If $H(f)$ is compact, then equation (\ref{eq3.1.1}) is
uniformly dissipative if and if there is a positive number $r$
such that
\begin{equation}\label{eq3.1.4}
\limsup\limits_{t\to+\infty}\vert\varphi(t,x_0,g)\vert<r \quad
(x_0\in \mathbb R^d,\, g\in H(f))\ .\nonumber
\end{equation}
\end{lemma}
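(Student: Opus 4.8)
The plan is to reduce the statement to the equivalence of conditions 1 and 4 in Theorem \ref{t3.1.1}, which applies precisely because $Y:=H(f)$ is compact by hypothesis. First I would observe that the condition (\ref{eq3.1.4}) is literally condition~1 of Theorem \ref{t3.1.1} (with $R:=r$ and with $y:=g$ ranging over $Y=H(f)$), while uniform dissipativity of the cocycle $\varphi$ in the sense of Definition \ref{def9.5.2} is exactly condition~4. Thus, granting Theorem \ref{t3.1.1}, condition (\ref{eq3.1.4}) holds if and only if the cocycle $\varphi$ is uniformly dissipative. It then remains only to identify uniform dissipativity of equation (\ref{eq3.1.1}) in the sense of Definition \ref{def3.1.1} with uniform dissipativity of the associated cocycle.

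For this identification I would use the cocycle identity $x(t;x_0,t_0)=\varphi(t-t_0,x_0,f_{t_0})$, valid because $\varphi(\cdot,x_0,f_{t_0})$ solves $v'=f(t+t_0,v)$ with $v(0)=x_0$, together with the fact that every translate $f_{t_0}$ belongs to $Y=H(f)$. The implication ``equation uniformly dissipative $\Rightarrow$ cocycle uniformly dissipative'' is exactly Lemma \ref{lDE1}. For the converse I would assume the cocycle uniformly dissipative, so by condition~4 of Theorem \ref{t3.1.1} there is $R_0>0$ and, for each $r>0$, a time $l(r)>0$ with $|\varphi(t,u,y)|\le R_0$ for all $t\ge l(r)$, $|u|\le r$ and $y\in Y$. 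Then for any $t_0\in\mathbb R$ and $|x_0|\le r$ one estimates $|x(t;x_0,t_0)|=|\varphi(t-t_0,x_0,f_{t_0})|\le R_0$ whenever $t\ge t_0+l(r)$. Taking $R:=R_0+1$ in Definition \ref{def3.1.1} yields $|x(t;x_0,t_0)|<R$ for $t\ge t_0+l(r)$ with $l(r)$ independent of $t_0$ and of $x_0$ in the ball $|x_0|\le r$, i.e. equation (\ref{eq3.1.1}) is uniformly dissipative.

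Combining the two reductions, condition (\ref{eq3.1.4}) is equivalent to uniform dissipativity of the cocycle, which is in turn equivalent to uniform dissipativity of equation (\ref{eq3.1.1}), which proves the lemma. The only genuinely nontrivial input is the equivalence $1\Leftrightarrow 4$ of Theorem \ref{t3.1.1}, which upgrades the pointwise-in-$(x_0,g)$ bound on the $\limsup$ to a bound uniform in time and in $y\in Y$; this is where compactness of $H(f)$ is essential, as it permits passage from pointwise absorption to uniform absorption over the whole hull. The remaining work is merely the bookkeeping that matches Definition \ref{def3.1.1} with Definition \ref{def9.5.2} through the cocycle identity, and the key observation there is that uniformity of the bound over all $y\in Y\supseteq\{f_{t_0}:t_0\in\mathbb R\}$ is exactly what forces the absorption time $l(r)$ to be independent of the initial time $t_0$.
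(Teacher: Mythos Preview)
Your argument is correct. The paper itself does not prove this lemma at all: it simply cites it from \cite[ChIII]{Che_2015} and then remarks, in the sentence following the statement, that together with Lemma~\ref{lDE1} it yields the equivalence between uniform dissipativity of equation~(\ref{eq3.1.1}) and dissipativity of the associated nonautonomous dynamical system. Your proof is thus more than the paper provides, and your route via Theorem~\ref{t3.1.1} (equivalence of conditions~1 and~4) plus the translation between Definitions~\ref{def3.1.1} and~\ref{def9.5.2} through the cocycle identity $x(t;x_0,t_0)=\varphi(t-t_0,x_0,f_{t_0})$ is exactly the natural way to unpack the cited result using the tools already assembled in the paper.
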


Thus, for the equation (\ref{eq3.1.1}) ($f$ is regular and $H(f)$
is compact) we established that it is uniformly dissipative if and
only if the non-autonomous dynamical system generated by this
equation is dissipative.

\textbf{Condition (A2).} Equation (\ref{eq3.1.1}) is monotone.
This means that the cocycle $\langle \mathbb R^n,\varphi,$ $
(H(f),$ $\mathbb{R},$ $\sigma)\rangle$ (or shortly $\varphi$)
generated by (\ref{eq3.1.1}) is monotone, i.e., if $u,v\in \mathbb
R^{d}$ and $u\le v$ then $\varphi(t,u,g)\le \varphi(t,v,g)$ for
all $t\ge 0$ and $g\in H(f)$.

Let $K$ be a closed cone in $\mathbb R^d$. The dual cone to K is
the closed cone $K^{*}$ in the dual space $\big{(}\mathbb
R^{d}\big{)}^{*}$ of linear functions on $\mathbb R^d$, defined by
\begin{equation}\label{eqK01}
K^{*}:=\{\lambda \in \big{(}\mathbb R^{d}\big{)}^{*}:\ \langle
\lambda,x\rangle \ge 0\ \mbox{for any}\ x\in K\},\nonumber
\end{equation}
where $\langle \cdot , \cdot\rangle$ is the scalar product in
$\mathbb R^{d}$.

Recall \cite{Smi_1987},\cite[ChV]{Smi_1995} that the function
$f\in C(\mathbb R\times \mathbb R^{d},\mathbb R^d)$ is said to be
quasimonotone if for any $(t,u),(t,v)\in \mathbb R\times \mathbb
R^d$ and $\phi \in \big{(}\mathbb R^{d}_{+}\big{)}^{*}$ we have:
$u \le v$ and $\phi(u)=\phi(v)$ implies $\phi(f(t,u))\le
\phi(f(t,v))$.

\begin{lemma}\label{lK1} Let $f\in
C(\mathbb R\times \mathbb R^{d},\mathbb R^d)$ be a regular and
quasimonotone function, then the following statements hold:
\begin{enumerate}
\item if $u\le v$, then $\varphi(t,u,f)\le \varphi(t,v,f)$ for any
$t\ge 0$; \item any function $g\in H(f)$ is quasimonotone; \item
$u\le v$ implies $\varphi(t,u,g)\le \varphi(t,v,g)$ for any $t\ge
0$ and $g\in H(f)$; \item equation (\ref{eq3.1.1}) is monotone.
\end{enumerate}
\end{lemma}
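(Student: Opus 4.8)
The plan is to prove the four assertions in order, with the first one --- the order-preserving property of the flow of the fixed equation --- carrying essentially all of the work; the remaining three then follow by a limit argument and by unwinding definitions. For statement (i) I would establish the Kamke--M\"uller comparison inequality directly, via a strict comparison with a perturbed solution followed by a passage to the limit. Write $x(t):=\varphi(t,u,f)$ and $y(t):=\varphi(t,v,f)$ with $u\le v$, and set $\mathbf 1:=(1,\dots,1)$. For $\varepsilon>0$ let $z_\varepsilon$ be the solution of the perturbed Cauchy problem
\[
z'=f(t,z)+\varepsilon\mathbf 1,\qquad z_\varepsilon(0)=v+\varepsilon\mathbf 1 .
\]
I claim that $x(t)<z_\varepsilon(t)$ coordinatewise (strict in every component) for all $t\ge 0$. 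This holds at $t=0$ because $u\le v<v+\varepsilon\mathbf 1$. If it were to fail, let $t_1>0$ be the first time at which some coordinate of $x$ meets the corresponding coordinate of $z_\varepsilon$; then $x(t_1)\le z_\varepsilon(t_1)$ with $x_i(t_1)=(z_\varepsilon)_i(t_1)$ for some index $i$. Applying quasimonotonicity with the coordinate functional $\phi=e_i^{*}\in(\mathbb R^{d}_{+})^{*}$ (for which $\phi(u)=\phi(v)$ means exactly $u_i=v_i$) gives $f_i(t_1,x(t_1))\le f_i(t_1,z_\varepsilon(t_1))$, whence
\[
\frac{d}{dt}\big[(z_\varepsilon)_i-x_i\big]\Big|_{t=t_1}=f_i(t_1,z_\varepsilon(t_1))+\varepsilon-f_i(t_1,x(t_1))\ge\varepsilon>0 ,
\]
which contradicts the fact that the nonnegative difference $(z_\varepsilon)_i-x_i$ decreases to $0$ at $t_1$. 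Hence the strict inequality holds throughout, and letting $\varepsilon\to 0$ and invoking continuous dependence on initial data and on the right-hand side (available because $f$ is regular, so the limit solution is unique) yields $z_\varepsilon(t)\to y(t)$ locally uniformly, giving $x(t)\le y(t)$ for all $t\ge 0$.

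For statement (ii) I would use that quasimonotonicity is a pointwise condition in $(t,u)$ that is preserved both under time translation and under passage to the limit. Each translate $f_{\tau}$ is quasimonotone, since the defining implication is required to hold at every $t$. Given $g\in H(f)$, choose $\tau_n$ with $f_{\tau_n}\to g$ in the compact-open topology; then for fixed $(t,u),(t,v)$ with $u\le v$ and $\phi(u)=\phi(v)$ one has $\phi(f_{\tau_n}(t,u))\le\phi(f_{\tau_n}(t,v))$ for every $n$, and passing to the limit using continuity of $\phi$ together with local uniform convergence gives $\phi(g(t,u))\le\phi(g(t,v))$.

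Statements (iii) and (iv) are then immediate. Every $g\in H(f)$ is regular --- this is precisely Condition (A1), which is imposed on the whole $H$-class --- and is quasimonotone by (ii), so the argument of (i) applies verbatim with $g$ in place of $f$, yielding $u\le v\Rightarrow\varphi(t,u,g)\le\varphi(t,v,g)$ for all $t\ge 0$ and $g\in H(f)$, which is (iii). Finally (iv) is nothing but a restatement of (iii): by Condition (A2), monotonicity of equation (\ref{eq3.1.1}) means exactly that the cocycle $\varphi$ satisfies this order-preserving implication over all of $H(f)$.

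I expect the main obstacle to be the strict-comparison step in (i): one must arrange the perturbation so that, at a first contact time, the inequality fails in a single coordinate, thereby activating the quasimonotonicity hypothesis through the corresponding extremal functional $e_i^{*}$, and then carefully justify removing the perturbation via continuous dependence. The rest of the proof is either a routine limit argument (ii) or a purely definitional unwinding (iii)--(iv).
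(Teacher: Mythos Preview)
Your proposal is correct and, for parts (ii)--(iv), matches the paper's proof essentially verbatim: a limit argument along translates for (ii), then (iii) by applying (i) to each $g\in H(f)$, and (iv) as a definitional restatement.

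The only genuine difference concerns part (i). The paper does not prove it at all but simply cites \cite[Ch.~III]{HS_2005} (Hirsch--Smith). You instead supply the standard Kamke--M\"uller perturbation argument in full. Your argument is sound; the one technical point you might flag explicitly is that the perturbed solution $z_\varepsilon$ need only exist on an arbitrary compact interval $[0,T]$ (which it does for $\varepsilon$ small, by continuous dependence), so the strict comparison and the limit $\varepsilon\to 0$ are carried out on each such interval before letting $T\to\infty$. What your route buys is self-containment: the reader need not consult an external monograph for the key order-preserving step. What the paper's route buys is brevity, at the cost of a black-box citation.
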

\begin{proof} The first statement is proved in
\cite[ChIII]{HS_2005}.

Let $g\in H(f)$, then there exists a sequence $\{h_k\}\subset
\mathbb R$ such that $g(t,u)=\lim\limits_{k\to
\infty}f_{h_k}(t,u)$ for any $(t,u)\in\mathbb R\times \mathbb
R^d$. Let $u\le v$ ($u,v\in\mathbb R^d$) and $\phi \in
\big{(}\mathbb R^{n}_{+}\big{)}^{*}$ such that $\phi(u)=\phi(v)$.
Since $f$ is quasimonote, then we will have
\begin{equation}\label{eqK1}
f_{i}(t+h_k,u)\le f_{i}(t+h_k,v)
\end{equation}
and passing into limit in (\ref{eqK1}) as $k\to \infty$ we obtain
that $g$ is quasimonotone too.

Finally, the third statement follows from the first and second
statements. Lemma is completely proved.
\end{proof}

\begin{definition}\label{defOO1*} A solution $\varphi(t,u_0,f)$ of
equation (\ref{eq3.1.1}) is said to be:
\begin{enumerate}
\item[-] uniformly Lyapunov stable in the positive direction, if
for arbitrary $\varepsilon >0$ there exists $\delta
=\delta(\varepsilon)>0$ such that
$|\varphi(t_0,u,f)-\varphi(t_0,u_0,f)|<\delta$ ($t_0\in \mathbb
R$, $u\in\mathbb R^d$) implies
$|\varphi(t,x,f)-\varphi(t,x_0,f)|<\varepsilon$ for any $t\ge
t_0$; \item[-] compact on $\mathbb R_{+}$ if the set
$Q:=\overline{\varphi(\mathbb R_{+},u_0,f)}$ is a compact subset
of $\mathbb R^{d}$, where by bar is denoted the closure in
$\mathbb R^{d}$ and $\varphi(\mathbb
R_{+},u_0,f):=\{\varphi(t,u_0,f):\ t\in \mathbb R_{+}\}$.
\end{enumerate}
\end{definition}

Let $f\in C(\mathbb R\times \mathbb R^d,\mathbb R^d)$,
$\sigma(t,f)$ be the motion (in the shift dynamical system
$(C(\mathbb R\times \mathbb R^d,\mathbb R^d),\mathbb R,\sigma)$)
generated by $f$, $u_0\in \mathbb R^d$, $\varphi(t,u_0,f)$ be the
solution of equation (\ref{eq3.1.1}), $x_0:=(u_0,f)\in X:=\mathbb
R^d\times H(f)$ and $\pi(t,x_0):=(\varphi(t,u_0,f),\sigma(t,f))$
the motion of skew-product dynamical system $(X,\mathbb
R_{+},\pi)$.

\begin{definition}\label{defCS01*} A solution $\varphi(t,u_0,f)$ of
equation (\ref{eq3.1.1}) is called
\cite{Che_2009},\cite{scher72},\cite{Sch85} compatible
(respectively, strongly compatible or uniformly compatible) if the
motion $\pi(t,x_0)$ is comparable (respectively, strongly
comparable or uniformly comparable) by character of recurrence
with $\sigma(t,f)$.
\end{definition}

\begin{lemma}\label{l3.1}\cite{CC_2009}
Let $\langle W,\varphi, (Y,\mathbb T,\sigma)\rangle$ be a cocycle
and $ \langle (X,\mathbb T_{+},\pi),(Y,\mathbb T,\sigma),h\rangle $
be a non-autonomous dynamical system associated by cocycle
$\varphi$. Suppose that $x_0:=(u_0,y_0)\in X:=W\times Y$ and the set
$Q_{(u_0,y_0)}^{+}:=\overline{\{\varphi(t,u_0,y_0)\ |\ t\in \mathbb
T_{+}\}},$ where $\mathbb T_{+}:=\{t\in \mathbb T\ |\ t\ge 0 \}$, is
compact.

Then the set $H^{+}(x_0):=\overline{\{\pi(t,x_0)\ |\ t\in\mathbb
T_{+}\}}$ is conditionally compact.
\end{lemma}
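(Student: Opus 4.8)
The statement to prove is Lemma~\ref{l3.1}: given a cocycle $\langle W,\varphi,(Y,\mathbb T,\sigma)\rangle$ with associated skew-product system $(X,\mathbb T_+,\pi)$, $X=W\times Y$, and a point $x_0=(u_0,y_0)$ whose positive fiber-semitrajectory $Q^+_{(u_0,y_0)}=\overline{\{\varphi(t,u_0,y_0):t\in\mathbb T_+\}}$ is compact in $W$, I must show that $H^+(x_0)=\overline{\{\pi(t,x_0):t\in\mathbb T_+\}}$ is \emph{conditionally compact} in the fiber space $(X,h,Y)$ with $h=pr_2$. By Definition~\ref{def3.2} this means: $H^+(x_0)$ is closed, and for every precompact $Y'\subseteq Y$ the set $h^{-1}(Y')\cap H^+(x_0)$ is precompact in $X$.

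**Main plan.** The set $H^+(x_0)$ is closed by construction (it is a closure), so the real content is the conditional precompactness. The key observation is that $\pi(t,x_0)=(\varphi(t,u_0,y_0),\sigma(t,y_0))$, so the $W$-component of every point of the orbit lies in $Q^+_{(u_0,y_0)}$, and hence $H^+(x_0)\subseteq Q^+_{(u_0,y_0)}\times H^+(y_0)$ where $H^+(y_0)=\overline{\{\sigma(t,y_0):t\in\mathbb T_+\}}$. Fix a precompact $Y'\subseteq Y$. Then
\begin{equation}\label{eqPlan1}
h^{-1}(Y')\cap H^+(x_0)\subseteq Q^+_{(u_0,y_0)}\times \overline{Y'}.
\end{equation}
The right-hand side of \eqref{eqPlan1} is a product of a compact set (by hypothesis $Q^+_{(u_0,y_0)}$ is compact) and a compact set ($\overline{Y'}$ is compact since $Y'$ is precompact in the complete metric space $Y$), hence compact in $X=W\times Y$. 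A subset of a compact set in a metric space is precompact, so $h^{-1}(Y')\cap H^+(x_0)$ is precompact. This establishes conditional precompactness, and together with closedness gives conditional compactness.

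**Verifying the inclusion and the obstacle.** The one step that deserves care is the inclusion $H^+(x_0)\subseteq Q^+_{(u_0,y_0)}\times H^+(y_0)$. For points of the orbit itself this is immediate from the formula $\pi(t,x_0)=(\varphi(t,u_0,y_0),\sigma(t,y_0))$. For a general point of the closure $H^+(x_0)$, take a sequence $t_n$ with $\pi(t_n,x_0)\to(w,y)$; then the $W$-coordinates $\varphi(t_n,u_0,y_0)$ converge to $w$, so $w\in Q^+_{(u_0,y_0)}$, and the $Y$-coordinates $\sigma(t_n,y_0)$ converge to $y\in H^+(y_0)$. Thus the closure is contained in the product of closures, confirming \eqref{eqPlan1}. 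The main (and only genuine) obstacle is conceptual rather than technical: recognizing that conditional precompactness follows from a fiberwise bound on the $W$-component combined with the precompactness of $Y'$ in the base. No deep dynamics is needed; the result is essentially a packaging of the product-space structure of $X$ together with the compactness hypothesis on the fiber-semitrajectory. One should simply record that $Y$ is a complete metric space so that precompact subsets have compact closure, which is what makes $\overline{Y'}$ compact in \eqref{eqPlan1}.
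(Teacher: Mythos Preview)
Your argument is correct. The paper itself does not supply a proof of this lemma; it is stated with a citation to \cite{CC_2009}, so there is no in-paper proof to compare against. The route you take---observing that $H^{+}(x_0)\subseteq Q^{+}_{(u_0,y_0)}\times H^{+}(y_0)$ and hence that $h^{-1}(Y')\cap H^{+}(x_0)$ sits inside the compact product $Q^{+}_{(u_0,y_0)}\times\overline{Y'}$ for any precompact $Y'\subseteq Y$---is exactly the standard argument one expects (and the one found in the cited source): the result is purely a consequence of the product structure $X=W\times Y$ together with the compactness of the fiber-semitrajectory, with no dynamical input beyond the formula $\pi(t,x_0)=(\varphi(t,u_0,y_0),\sigma(t,y_0))$.
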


\begin{remark}\label{remAG} If $x_0:=(u_0,y_0)\in X:=W\times Y$ and
$\alpha_{y_0}$ (respectively, $\gamma_{y_0}$) is a point from $X$
defined in Lemma \ref{lAPS2} then we denote by $\alpha_{u_0}$
(respectively, $\gamma_{u_0}$) a point from $W$ such that
$\alpha_{y_0}=(\alpha_{u_0},y_0)$ (respectively,
$\gamma_{y_0}=(\gamma_{u_0},y_0)$).
\end{remark}

\begin{definition} A function $f$ is said to be Poisson stable (respectively, strongly Poisson stable) in $t\in\mathbb T$ uniformly
with respect to $u$ on every compact subset of $\mathbb R^d$ if
the point $f\in C(\mathbb T\times \mathbb R^d,\mathbb R^d)$ is
Poisson stable (respectively, strongly Poisson stable) in shift
dynamical system $ (C(\mathbb T\times \mathbb R^d,\mathbb
R^d),\mathbb T,\sigma)$.
\end{definition}

\begin{theorem}\label{thA1} Suppose that the following assumptions
are fulfilled:
\begin{enumerate}
\item[-] the function $f\in C(\mathbb R\times \mathbb R^d,\mathbb
R^nd)$ is positively Poisson stable in $t\in\mathbb R$ uniformly
with respect to $u$ on every compact subset from $\mathbb R^d$;
\item[-] equation (\ref{eq3.1.1}) is uniformly dissipative;
\item[-] each solution $\varphi(t,u_0,f)$ of equation
(\ref{eq3.1.1}) is positively uniformly Lyapunov stable.
\end{enumerate}

Then under conditions $(A1)-(A2)$ the following statement hold:
\begin{enumerate} \item[1.] equation (\ref{eq3.1.1}) has at least
one solution $\varphi(t,\gamma_{u_0},f)$ defined and bounded on
$\mathbb R$ which is compatible and belongs to Levinson center of
(\ref{eq3.1.1}). \item[2.] if the function $f\in C(\mathbb R\times
\mathbb R^n,\mathbb R^n)$ is stationary (respectively,
$\tau$-periodic, Levitan almost periodic, almost recurrent, almost
automorphic, Poisson stable) in $t\in \mathbb R$ uniformly with
respect to $u$ on every compact subset from $\mathbb R^n$, then
$\varphi(t,\gamma_{u_0},f)$ is also stationary (respectively,
$\tau$-periodic, Levitan almost periodic, almost recurrent, almost
automorphic, Poisson stable).
\end{enumerate}
\end{theorem}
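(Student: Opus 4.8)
The plan is to realise equation (\ref{eq3.1.1}) as a monotone cocycle and then to read off both statements directly from the abstract Theorem \ref{thBM1} and Corollary \ref{corPM1}. First I would set up, exactly as in Example \ref{ex3.1.2}, the cocycle $\langle \mathbb R^d,\varphi,(Y,\mathbb R,\sigma)\rangle$ over the Bebutov system with base $Y:=H(f)$; regularity of $f$ (Condition (A1)) is what guarantees that $\varphi$ is well defined, continuous, and satisfies the cocycle identity. The whole task then reduces to checking that $\varphi$ meets the three hypotheses of Theorem \ref{thBM1} — monotonicity, existence of a compact global attractor, and positive uniform Lyapunov stability — after which the two conclusions follow by invoking the corresponding abstract results and translating back into the language of Definition \ref{defCS01*}.

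Monotonicity of $\varphi$ is immediate from Condition (A2). For the compact global attractor I would chain together the dissipativity results: since (\ref{eq3.1.1}) is uniformly dissipative, Lemma \ref{lDE1} makes the cocycle $\varphi$ uniformly dissipative; because the fibre $\mathbb R^d$ is finite-dimensional, Remark \ref{remGA1} upgrades this to compact dissipativity; and then Theorem \ref{t2.7.3} together with Definition \ref{defGA} produces the Levinson center $\mathbf I=\{I_y:\ y\in Y\}$, which is the required compact global attractor, with $I=\bigcup_{y}I_y$ relatively compact. It is worth stressing that this step never uses compactness of $Y=H(f)$; this is essential, because in the merely Poisson stable case $H(f)$ need not be compact, and the only structural input is the finite dimension of the fibre.

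The delicate hypothesis is positive uniform Lyapunov stability of the cocycle, and this is where I expect the main technical work to lie. The hypothesis is stated per solution (Definition \ref{defOO1*}), with modulus $\delta(\varepsilon)$ independent of the initial moment $t_0$ and of the solution; what Theorem \ref{thBM1} needs is the cocycle formulation, namely a $\delta$ valid simultaneously for all base points $g\in H(f)$. For $g=f$ itself the cocycle estimate is immediate, by taking $t_0=0$ in Definition \ref{defOO1*}. To pass to an arbitrary $g\in H(f)$ I would use the shift identity $\varphi(t,v,f_{\tau})=x(t+\tau;\tau,v)$ recorded in the proof of Lemma \ref{lDE1}: the uniformity of the stability modulus in the initial moment is precisely what lets the estimate survive the time translations $f\mapsto f_{\tau}$, and continuous dependence of solutions on the right-hand side then extends it, with the same $\delta$, to every $g=\lim_{n}f_{\tau_n}$ in $Y$. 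The uniformity over the possibly noncompact base $Y$ is the point that requires care.

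With the three hypotheses verified I would finish as follows. Positive Poisson stability of $f$ means $y_0:=f\in\omega_{y_0}$, so Theorem \ref{thBM1}(1) supplies a point $a_{y_0}=\gamma_{u_0}\in I_{y_0}$ — membership by Corollary \ref{corF1}, applied to the relatively compact full trajectory — whose trajectory is the solution $\varphi(t,\gamma_{u_0},f)$, defined on all of $\mathbb R$, comparable by character of recurrence with $f$, and bounded since it lies in the relatively compact set $I$; this is exactly statement 1 (compatibility in the sense of Definition \ref{defCS01*} and membership in the Levinson center). For statement 2 I would invoke Corollary \ref{corPM1}(1): once $f$ is $\tau$-periodic, Levitan almost periodic, almost recurrent, almost automorphic, recurrent, or Poisson stable, the comparability just established, combined with Theorems \ref{thShc2.1} and \ref{tS2}, transfers the same recurrence character to $\varphi(t,\gamma_{u_0},f)$, the stationary case being covered directly by Theorem \ref{thShc2.1}.
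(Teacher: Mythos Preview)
Your proposal is correct and follows exactly the paper's approach: set up the cocycle $\langle \mathbb R^d,\varphi,(H(f),\mathbb R,\sigma)\rangle$ via Example~\ref{ex3.1.2} and then invoke Theorem~\ref{thBM1} (first statement) together with Corollary~\ref{corPM1}. The paper's own proof is extremely terse and does not spell out the verification of the three hypotheses of Theorem~\ref{thBM1}; your treatment of the passage from uniform dissipativity to a compact global attractor (via Lemma~\ref{lDE1} and Remark~\ref{remGA1}(1), without requiring compactness of $H(f)$) and of the transfer of uniform Lyapunov stability from $f$ to every $g\in H(f)$ via the shift identity and a limit argument merely makes explicit what the paper leaves implicit.
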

\begin{proof} Let $f\in C(\mathbb R\times \mathbb R^d,\mathbb R^d)$
and $(C(\mathbb R\times \mathbb R^d,\mathbb R^nd),\mathbb
R,\sigma)$ be the shift dynamical system no $C(\mathbb R\times
\mathbb R^d,\mathbb R^d)$. Denote by $Y:=H(f)$ and $(Y,\mathbb
R,\sigma)$ the shift dynamical system on $H(f)$ induced by
$(C(\mathbb R\times \mathbb R^d,\mathbb R^d),\mathbb R,\sigma)$.
Consider the cocycle $\langle\mathbb R^d,\varphi,(Y,\mathbb
R,\sigma)\rangle$ generated by equation (\ref{eq3.1.1}) (see
Condition (A1)). Now to finish the proof of Theorem it is
sufficient to apply Theorems \ref{thBM1} (the first statement) and
Corollary \ref{corPM1}. Theorem is proved.
\end{proof}

\begin{theorem}\label{thA2} Suppose that the following assumptions
are fulfilled:
\begin{enumerate}
\item[-] the function $f\in C(\mathbb R\times \mathbb R^d,\mathbb
R^d)$ is strongly Poisson stale in $t\in\mathbb R$ uniformly with
respect to $u$ on every compact subset from $\mathbb R^d$;
\item[-] equation (\ref{eq3.1.1}) is uniformly dissipative;
\item[-] each solution $\varphi(t,u_0,g)$ of every equation
(\ref{eq3.1.2}) is positively uniformly Lyapunov stable.
\end{enumerate}

Then under conditions $(A1)-(A2)$ the following statements hold:
\begin{enumerate}
\item[1.] every equation (\ref{eq3.1.2}) has at least one solution
$\varphi(t,\gamma_{v_0},g)$ defined and bounded on $\mathbb R$
such that:
\begin{enumerate}
\item[2.] solution $\varphi(t,\gamma_{v_0},g)$ belongs to Levinson
center of equation (\ref{eq3.1.2}); \item[3.]
$\varphi(t,\gamma_{v_0},g)$ is a strongly compatible solution of
(\ref{eq3.1.2}).
\end{enumerate}
\item[4.] if the function $f\in C(\mathbb R\times \mathbb
R^d,\mathbb R^d)$ is stationary (respectively, $\tau$-periodic,
Bohr almost periodic, almost automorphic, recurrent, pseudo
recurrent and $H(f)$ is compact, uniformly Poisson stable and
$H(f)$ is compact) in $t\in \mathbb R$ uniformly with respect to
$u$ on every compact subset from $\mathbb R^d$, then
$\varphi(t,\gamma_{u_0},f)$ is also stationary (respectively,
$\tau$-periodic, Levitan almost periodic, almost recurrent, almost
automorphic, uniformly Poisson stable).
\end{enumerate}
\end{theorem}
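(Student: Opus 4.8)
The plan is to run the argument of Theorem \ref{thA1} in its ``strongly comparable'' version, i.e.\ with the second statement of Theorem \ref{thBM1} and the second statement of Corollary \ref{corPM1} in place of the first. First I would set up the nonautonomous framework exactly as in Example \ref{ex3.1.2}: take $Y:=H(f)$ with the shift flow $(Y,\mathbb R,\sigma)$ induced from $(C(\mathbb R\times\mathbb R^d,\mathbb R^d),\mathbb R,\sigma)$, and let $\langle\mathbb R^d,\varphi,(Y,\mathbb R,\sigma)\rangle$ be the cocycle generated by equation (\ref{eq3.1.1}); Condition (A1) guarantees this cocycle is well defined. The whole problem then reduces to verifying the three hypotheses of Theorem \ref{thBM1} for $\varphi$ and checking that the base point $g$ is strongly Poisson stable for every $g\in H(f)$.

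For the hypotheses of Theorem \ref{thBM1}, monotonicity of $\varphi$ is exactly Condition (A2). The compact global attractor I would obtain by chaining the dissipativity results: Lemma \ref{lDE1} promotes uniform dissipativity of (\ref{eq3.1.1}) to uniform dissipativity of $\varphi$, and since the fibre $\mathbb R^d$ is finite dimensional, Remark \ref{remGA1} turns this into compact dissipativity, so that Theorem \ref{t2.7.3} and Definition \ref{defGA} furnish the Levinson center $\mathbf I=\{I_y\mid y\in Y\}$ as the required attractor. The third hypothesis, positive uniform Lyapunov stability of the cocycle in the sense demanded by Theorem \ref{thBM1}, must be distilled from the assumption that \emph{every} solution of \emph{every} equation (\ref{eq3.1.2}) is positively uniformly Lyapunov stable; I would argue this on the relatively compact set $I=\bigcup_{y}I_y$, where per-solution stability, together with the uniform-in-$y$ character of the dissipativity, yields by a compactness/equicontinuity argument a single modulus $\delta(\varepsilon,K)$ valid for all pairs in a compact $K$ and all $y$.

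With the hypotheses in hand I would pass strong Poisson stability down the hull: by Definition \ref{defSP1}, $f$ strongly Poisson stable means $p\in\omega_p$ for every $p\in H(f)$, and since $H(g)\subseteq H(f)$ for each $g\in H(f)$, every such $g$ is again strongly Poisson stable. Fixing $g$ and taking $y_0=g$, the second statement of Theorem \ref{thBM1} produces a point $a_g\in I_g$ whose full trajectory $\gamma_g$ with $\gamma_g(0)=(a_g,g)$ is strongly comparable by the character of recurrence with $g$. Reading this back through Remark \ref{remAG}, the solution $\varphi(t,\gamma_{v_0},g)$ with $\gamma_{v_0}=a_g$ is defined and bounded on $\mathbb R$, lies in the Levinson center of (\ref{eq3.1.2}) by Corollary \ref{corF1}, and is strongly compatible by Definition \ref{defCS01*}; this is exactly statement 1 with its substatements.

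Statement 4 I would then read off by feeding the recurrence type of the base point $y_0=f$ into the second statement of Corollary \ref{corPM1} (equivalently Theorems \ref{thShc2.1} and \ref{tS2}). The point to watch is that strong comparability alone transfers the Shcherbakov-type properties of Theorem \ref{thShc2.1}, whereas the properties that require \emph{uniform} comparability, namely recurrence, almost automorphy and uniform Poisson stability, need Lagrange stability of $x_0$ to upgrade strong comparability to uniform comparability through Theorem \ref{thShc6}; this is precisely why compactness of $H(f)$ is imposed in those cases. The main obstacle I anticipate is the hypothesis check in the previous paragraph, i.e.\ deducing the cocycle's positive uniform Lyapunov stability from the per-solution stability assumed in the theorem; the remaining steps are a direct assembly of the cited results.
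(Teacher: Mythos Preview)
Your proposal is correct and follows essentially the same route as the paper: set up the cocycle over $Y=H(f)$ as in Example~\ref{ex3.1.2}, then invoke the second statement of Theorem~\ref{thBM1} together with Corollary~\ref{corPM1}. The paper's own proof is in fact a one-line reference to these results (``proved similarly as Theorem~\ref{thA1} using Theorem~\ref{thBM1} (second statement) and Corollary~\ref{corPM1}''), so your write-up is considerably more explicit about the hypothesis checks (dissipativity via Lemma~\ref{lDE1} and Remark~\ref{remGA1}, monotonicity via (A2), inheritance of strong Poisson stability by $g\in H(f)$, and the upgrade to uniform comparability through Theorem~\ref{thShc6}) than the paper itself.
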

\begin{proof} This statement can be proved similarly as Theorem
\ref{thA1} using  Theorems \ref{thBM1} (the second statement) and
Corollary \ref{corPM1}. Theorem is proved.
\end{proof}

\subsection{Difference Equations}

\begin{example}\label{ex2.1}
{\em Consider the equation
\begin{equation}\label{eq2.1}
u_{n+1}=f(n,u_n),
\end{equation}
where $f\in C(\mathbb Z \times \mathbb R^{d},\mathbb R^{d})$.
Along with equation (\ref{eq2.1}) we will consider $H$-class of
(\ref{eq2.1}), i.e., the family of equation
\begin{equation}\label{eq2.1*}
v_{n+1}=g(n,v_n), \ (g\in H(f))
\end{equation}
where $H(f):=\overline{\{f_{\tau}|\ \tau\in\mathbb Z\}}$ and by
bar is denoted the closure in the space $C(\mathbb Z\times \mathbb
R^{d},\mathbb R^{d})$.

Denote by $\varphi(n,v,g)$ the solution of equation (\ref{eq2.1*})
with initial condition $\varphi(0,v,g)=v.$ From the general
properties of difference equations it follows that:
\begin{enumerate}
\item $\varphi(0,v,g)=v$ for all $v\in \mathbb R^{d}$ and $g\in
H(f);$ \item $\varphi (n+m,v,g)=\varphi (n,\varphi
(m,v,g),\sigma(m,g))$ for all $n,m \in  \mathbb Z_{+}$ and
$(v,g)\in \mathbb R^{d}\times H(f)$; \item the mapping $\varphi$
is continuous.
\end{enumerate}

Thus every equation (\ref{eq2.1}) generate a cocycle $\langle
\mathbb R^{d},\varphi, (H(f), \mathbb Z,\sigma)\rangle$ over
$(H(f), \mathbb Z,\sigma)$ with the fiber $\mathbb R^{d}$.}
\end{example}

\begin{lemma}\label{l8.2} Let $f\in C(\mathbb Z\times \mathbb R^{d},\mathbb R^{d}).$ Suppose that the following conditions hold:
\begin{enumerate}
\item $u_1,u_2\in \mathbb R^{d}$ and $u_1\le u_2$; \item the
function $f$ is monotone non-decreasing with respect to variable
$u\in \mathbb R^{d},$ i.e., $f(t,u_1)\le f(t,u_2)$
 for any $t\in\mathbb Z$.
\end{enumerate}

Then $\varphi(n,v_1,g) \le \varphi(n,v_2,g)$ for any $n\in Z_{+}$,
$v_1,v_2\in\mathbb R^{d}$ with $v_1\le v_2$ and $g\in H(f)$.
\end{lemma}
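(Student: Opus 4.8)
The plan is to reduce the statement to a one-step monotonicity argument and then iterate. First I would observe that it suffices to establish two facts: (a) every $g\in H(f)$ is again monotone non-decreasing in $u$, and (b) for any fixed $g$ that is monotone non-decreasing in $u$, the solution map $v\mapsto \varphi(n,v,g)$ preserves the order $\le$ for every $n\in\mathbb Z_{+}$. Fact (a) upgrades the hypothesis on $f$ to the whole hull $H(f)$, and fact (b) is where the conclusion is actually produced.

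For (a) I would argue exactly as in the proof of the second statement of Lemma \ref{lK1}. If $g\in H(f)$, then by definition of the $H$-class there is a sequence $\{h_k\}\subset\mathbb Z$ with $f_{h_k}\to g$ in $C(\mathbb Z\times\mathbb R^{d},\mathbb R^{d})$, that is $f(t+h_k,u)\to g(t,u)$ for every $(t,u)\in\mathbb Z\times\mathbb R^{d}$. Fix $u_1\le u_2$. For each $k$ the hypothesis on $f$ gives $f(t+h_k,u_2)-f(t+h_k,u_1)\in\mathbb R^{d}_{+}$. Since the cone $\mathbb R^{d}_{+}$ is closed, passing to the limit $k\to\infty$ yields $g(t,u_2)-g(t,u_1)\in\mathbb R^{d}_{+}$, i.e. $g(t,u_1)\le g(t,u_2)$ for all $t\in\mathbb Z$. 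Thus every $g\in H(f)$ inherits the monotone non-decreasing property.

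For (b) I would proceed by induction on $n$, using the one-step recurrence for $\varphi$. From the cocycle identity (property (2) of Example \ref{ex2.1}) with the roles of the increments taken as $1$ and $n$, namely $\varphi(n+1,v,g)=\varphi(1,\varphi(n,v,g),\sigma(n,g))$, together with $\varphi(1,w,h)=h(0,w)$ and $\sigma(n,g)(0,\cdot)=g(n,\cdot)$, one obtains the relation $\varphi(n+1,v,g)=g(n,\varphi(n,v,g))$. The base case $n=0$ is immediate, since $\varphi(0,v_1,g)=v_1\le v_2=\varphi(0,v_2,g)$. Assuming $\varphi(n,v_1,g)\le\varphi(n,v_2,g)$, the monotonicity of $g$ established in (a) gives $g(n,\varphi(n,v_1,g))\le g(n,\varphi(n,v_2,g))$, i.e. $\varphi(n+1,v_1,g)\le\varphi(n+1,v_2,g)$, which closes the induction.

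I do not expect a serious obstacle: this is the discrete analogue of the continuous monotonicity result, but it is considerably simpler because the one-step map is literally $g(n,\cdot)$, so no differential comparison principle is needed. The only point requiring (minor) care is step (a), where one must invoke the closedness of the order cone $\mathbb R^{d}_{+}$ to pass the inequality through the limit; everything else is a routine induction.
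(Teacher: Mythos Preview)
Your proposal is correct and follows essentially the same approach as the paper: first pass the monotonicity of $f$ to every $g\in H(f)$ by a limit argument, then prove $\varphi(n,v_1,g)\le\varphi(n,v_2,g)$ by induction on $n$ using the one-step recurrence. Your write-up is in fact slightly more careful than the paper's (you explicitly invoke closedness of $\mathbb R^{d}_{+}$ and derive the recurrence $\varphi(n+1,v,g)=g(n,\varphi(n,v,g))$ from the cocycle identity), but the argument is the same.
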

\begin{proof}. Let $g\in H(f)$ and $v_1,v_2\in \mathbb R^{d}$ with
$v_1\le v_2$, then there exists a sequence $\{\tau_{k}\}\subset
\mathbb Z$ such that $g(t,v)=\lim\limits_{k\to
\infty}f(t+\tau_{k},v)$ uniformly w.r.t. $v$ on every compact
subset of $\mathbb R^{d}$. Since $f$ is monotone in $u$, then we
have
\begin{equation}\label{eqME1}
f(t+\tau_{k},v_{1})\le f(t+\tau_{k},v_{2})
\end{equation}
for any $t\in\mathbb Z$ and $k\in\mathbb N$. Passing in limit as
$k\to \infty$ in (\ref{eqME1}) we obtain $g(t,v_1)\le g(t,v_2)$
for any $t\in \mathbb Z$.

Let $v_1\le v_2$  and $g\in H(f)$, then we have
$$\varphi(1,v_1,g)=g(0,v_1)\le g(0,v_2)=\varphi(1,v_2,g).$$
Suppose that $\varphi(s,v_1,g) \le \varphi(s,v_2,g)$ for all
$ks\le t,$ then we obtain
$$\varphi(s+1,v_1,g)=g(1,\varphi(s,v_1,g))\le g(1,\varphi(s,v_2,g))=\varphi(s+1,v_2,g).
$$
\end{proof}

\textbf{Condition (D).} Equation (\ref{eq2.1}) is monotone. This
means that the cocycle $\langle \mathbb R^d,\varphi,$ $ (H(f),$
$\mathbb{Z},$ $\sigma)\rangle$ (or shortly $\varphi$) generated by
(\ref{eq2.1}) is monotone, i.e., if $u,v\in \mathbb R^{n}$ and
$u\le v$ then $\varphi(t,u,g)\le \varphi(t,v,g)$ for all $t\ge 0$
and $g\in H(f)$.

\begin{definition}\label{defOO1} A solution $\varphi(t,u_0,f)$ of
equation (\ref{eq2.1}) is said to be:
\begin{enumerate}
\item[-] uniformly Lyapunov stable in the positive direction, if
for arbitrary $\varepsilon >0$ there exists $\delta
=\delta(\varepsilon)>0$ such that
$|\varphi(t_0,u,f)-\varphi(t_0,u_0,f)|<\delta$ ($t_0\in \mathbb
Z$, $u\in\mathbb R^d$) implies
$|\varphi(t,x,f)-\varphi(t,x_0,f)|<\varepsilon$ for any $t\ge
t_0$; \item[-] compact on $\mathbb Z_{+}$ if the set
$Q:=\overline{\varphi(\mathbb Z_{+},u_0,f)}$ is a compact subset
of $\mathbb R^{d}$, where by bar is denoted the closure in
$\mathbb R^{n}$ and $\varphi(\mathbb
Z_{+},u_0,f):=\{\varphi(t,u_0,f):\ t\in \mathbb Z_{+}\}$.
\end{enumerate}
\end{definition}

Let $f\in C(\mathbb Z\times \mathbb R^d,\mathbb R^d)$,
$\sigma(t,f)$ be the motion (in the shift dynamical system
$(C(\mathbb Z\times \mathbb R^d,\mathbb R^d),\mathbb Z,\sigma)$)
generated by $f$, $u_0\in \mathbb R^n$, $\varphi(t,u_0,f)$ be the
solution of equation (\ref{eq2.1}), $x_0:=(u_0,f)\in X:=\mathbb
R^d\times H(f)$ and $\pi(t,x_0):=(\varphi(t,u_0,f),\sigma(t,f))$
the motion of skew-product dynamical system $(X,\mathbb
Z_{+},\pi)$.

\begin{definition}\label{defCS01} A solution $\varphi(t,u_0,f)$ of
equation (\ref{eq2.1}) is called
\cite{Che_2009},\cite{scher72},\cite{Sch85} compatible
(respectively, strongly compatible or uniformly compatible) if the
motion $\pi(t,x_0)$ is comparable (respectively, strongly
comparable or uniformly comparable) by character of recurrence
with $\sigma(t,f)$.
\end{definition}

\begin{theorem}\label{thD1} Suppose that the following assumptions
are fulfilled:
\begin{enumerate}
\item[-] the function $f\in C(\mathbb Z\times \mathbb R^d,\mathbb
R^d)$ is positively Poisson stable in $t\in\mathbb Z$ uniformly
with respect to $u$ on every compact subset from $\mathbb R^n$;
\item[-] equation (\ref{eq2.1}) is uniformly dissipative; \item[-]
each solution $\varphi(t,u_0,f)$ of equation (\ref{eq2.1}) is
positively uniformly Lyapunov stable.
\end{enumerate}

Then under condition $(D)$ the following statement hold:
\begin{enumerate} \item[1.] equation (\ref{eq2.1}) has at least
one solution $\varphi(t,\gamma_{u_0},f)$ defined and bounded on
$\mathbb Z$ which is compatible and belongs to Levinson center of
(\ref{eq2.1}). \item[2.] if the function $f\in C(\mathbb Z\times
\mathbb R^d,\mathbb R^d)$ is stationary (respectively,
$\tau$-periodic, Levitan almost periodic, almost recurrent, almost
automorphic, Poisson stable) in $t\in \mathbb Z$ uniformly with
respect to $u$ on every compact subset from $\mathbb R^d$, then
$\varphi(t,\gamma_{u_0},f)$ is also stationary (respectively,
$\tau$-periodic, Levitan almost periodic, almost recurrent, almost
automorphic, Poisson stable).
\end{enumerate}
\end{theorem}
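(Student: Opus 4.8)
The plan is to reduce the assertion to the abstract monotone theory of Section \ref{S4}, exactly along the lines of the proof of Theorem \ref{thA1}, but in discrete time. First I would set up the shift dynamical system $(C(\mathbb Z\times \mathbb R^d,\mathbb R^d),\mathbb Z,\sigma)$, put $Y:=H(f)$, and let $(Y,\mathbb Z,\sigma)$ be the induced shift system on the hull. By Example \ref{ex2.1} the equation (\ref{eq2.1}) generates a cocycle $\langle \mathbb R^d,\varphi,(Y,\mathbb Z,\sigma)\rangle$ together with its skew-product nonautonomous system, so the whole problem is cast in the cocycle language to which Theorem \ref{thBM1} and Corollary \ref{corPM1} apply.

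The bulk of the work is then verifying the three hypotheses of Theorem \ref{thBM1} for this cocycle. Monotonicity is immediate from Condition (D) (and is precisely what Lemma \ref{l8.2} secures for the whole hull). For the compact global attractor I would first transfer uniform dissipativity of the difference equation to uniform dissipativity of the cocycle by the discrete analogue of Lemma \ref{lDE1}: writing $\varphi(t,x_0,f_{\tau})=x(t+\tau;\tau,x_0)$ and passing to the limit along a sequence $f_{\tau_n}\to g$ yields the uniform bound for every $g\in H(f)$; since the fibre $\mathbb R^d$ is finite-dimensional, Remark \ref{remGA1} upgrades uniform dissipativity to compact dissipativity, and Theorem \ref{t2.7.3} with Definition \ref{defGA} then produces the Levinson center $\{I_y|\ y\in Y\}$. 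The third hypothesis, positive uniform Lyapunov stability of the cocycle, I would obtain from the assumption that every solution of (\ref{eq2.1}) is positively uniformly Lyapunov stable by a hull/limiting argument: the stability estimate holds uniformly in the initial time $t_0$, which by the cocycle identity is exactly stability over the dense set of translates $\{f_{t_0}\}$, and closing up gives the estimate over all of $H(f)$.

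With the hypotheses in place, statement 1 follows by taking $y_0:=f$: positive Poisson stability of $f$ means precisely that $y_0$ is Poisson stable in $(Y,\mathbb Z,\sigma)$, so $y_0\in\omega_{y_0}$, and the first assertion of Theorem \ref{thBM1} furnishes a point $a_{y_0}=\gamma_{u_0}\in I_{y_0}$ whose full trajectory is comparable by character of recurrence with $y_0=f$; by Theorem \ref{t2.7.5} and Corollary \ref{corF1} the corresponding solution $\varphi(t,\gamma_{u_0},f)$ is defined and bounded on $\mathbb Z$ and lies in the Levinson center, and by Definition \ref{defCS01} comparability is exactly compatibility. Statement 2 is then immediate from Corollary \ref{corPM1} together with Theorems \ref{thShc2.1} and \ref{tS2}, which transport each prescribed recurrence property of $f$ to the compatible solution. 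The main obstacle I anticipate is the first transfer step, namely deducing the Levinson center of the cocycle from dissipativity of the equation over the possibly noncompact base $H(f)$ (here one cannot invoke Lemma \ref{l3.1.3}, which assumes $H(f)$ compact), together with checking that the per-solution uniform Lyapunov stability really does propagate to cocycle-level stability over the whole hull; everything after that is a direct citation of the abstract results.
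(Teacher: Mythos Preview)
Your proposal is correct and follows essentially the same approach as the paper: set up the shift system on $C(\mathbb Z\times\mathbb R^d,\mathbb R^d)$, take $Y:=H(f)$, form the cocycle of Example \ref{ex2.1}, and then invoke Theorem \ref{thBM1} (first statement) together with Corollary \ref{corPM1}. The paper's own proof is in fact terser than yours---it simply cites these two results without spelling out the hypothesis checks (monotonicity, compact global attractor, uniform Lyapunov stability) that you carefully outline.
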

\begin{proof} Let $f\in C(\mathbb Z\times \mathbb R^d,\mathbb R^d)$
and $(C(\mathbb Z\times \mathbb R^d,\mathbb R^d),\mathbb
Z,\sigma)$ be the shift dynamical system no $C(\mathbb Z\times
\mathbb R^d,\mathbb R^d)$. Denote by $Y:=H(f)$ and $(Y,\mathbb
Z,\sigma)$ the shift dynamical system on $H(f)$ induced by
$(C(\mathbb Z\times \mathbb R^d,\mathbb R^d),\mathbb Z,\sigma)$.
Consider the cocycle $\langle\mathbb R^n,\varphi,(Y,\mathbb
Z,\sigma)\rangle$ generated by equation (\ref{eq2.1}). Now to
finish the proof of Theorem it is sufficient to apply  Theorems
\ref{thBM1} (the first statement) and Corollary \ref{corPM1}.
Theorem is proved.
\end{proof}

\begin{theorem}\label{thD2} Suppose that the following assumptions
are fulfilled:
\begin{enumerate}
\item[-] the function $f\in C(\mathbb Z\times \mathbb R^d,\mathbb
R^d)$ is strongly Poisson stale in $t\in\mathbb Z$ uniformly with
respect to $u$ on every compact subset from $\mathbb R^d$;
\item[-] equation (\ref{eq2.1}) is uniformly dissipative; \item[-]
each solution $\varphi(t,u_0,g)$ of every equation (\ref{eq2.1*})
is positively uniformly Lyapunov stable.
\end{enumerate}

Then under condition $(D)$ the following statements hold:
\begin{enumerate}
\item[1.] every equation (\ref{eq2.1*}) has at least one solution
$\varphi(t,\gamma_{v_0},g)$ defined and bounded on $\mathbb Z$
such that:
\begin{enumerate}
\item[2.] solution $\varphi(t,\gamma_{v_0},g)$ belongs to Levinson
center of equation (\ref{eq2.1}); \item[3.]
$\varphi(t,\gamma_{v_0},g)$ is a strongly compatible solution of
(\ref{eq2.1*}).
\end{enumerate}
\item[4.] if the function $f\in C(\mathbb Z\times \mathbb
R^d,\mathbb R^d)$ is stationary (respectively, $\tau$-periodic,
Bohr almost periodic, almost automorphic, recurrent, pseudo
recurrent and $H(f)$ is compact, uniformly Poisson stable and
$H(f)$ is compact) in $t\in \mathbb Z$ uniformly with respect to
$u$ on every compact subset from $\mathbb R^n$, then
$\varphi(t,\gamma_{u_0},f)$ is also stationary (respectively,
$\tau$-periodic, Levitan almost periodic, almost recurrent, almost
automorphic, uniformly Poisson stable).
\end{enumerate}
\end{theorem}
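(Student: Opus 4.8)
The plan is to argue exactly as in the proof of Theorem~\ref{thD1}, realising the difference equation~(\ref{eq2.1}) as a cocycle over a shift system and invoking the abstract machinery of Section~\ref{S4}, but now using the \emph{second} statement of Theorem~\ref{thBM1} (the strongly Poisson stable case) in place of the first. Concretely, I would set $Y:=H(f)$ with the Bebutov shift $(Y,\mathbb Z,\sigma)$ induced from $(C(\mathbb Z\times\mathbb R^d,\mathbb R^d),\mathbb Z,\sigma)$ and consider the cocycle $\langle\mathbb R^d,\varphi,(Y,\mathbb Z,\sigma)\rangle$ generated by~(\ref{eq2.1}) as in Example~\ref{ex2.1}. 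The whole proof then reduces to checking that the three hypotheses of Theorem~\ref{thBM1} hold for this cocycle and that each base point $g\in Y$ is strongly Poisson stable.

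First I would verify the three hypotheses. Monotonicity of the cocycle is Condition~(D), and by Lemma~\ref{l8.2} it is inherited by every $g\in H(f)$. The existence of a compact global attractor (Levinson centre) $\mathbf I=\{I_y\mid y\in Y\}$ follows from uniform dissipativity of~(\ref{eq2.1}): by the discrete-time analogue of Lemma~\ref{lDE1} the cocycle $\varphi$ is uniformly dissipative, and since the fibre $\mathbb R^d$ is finite-dimensional, Remark~\ref{remGA1} gives compact dissipativity, whence Definition~\ref{defGA} supplies the Levinson centre. Finally, the standing assumption that \emph{every} solution of \emph{every} equation~(\ref{eq2.1*}) is positively uniformly Lyapunov stable is precisely what is needed to make the cocycle positively uniformly Lyapunov stable uniformly in $y\in Y$, i.e.\ hypothesis~(3) of Theorem~\ref{thBM1}.

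Next I would fix an arbitrary $g\in H(f)$ as base point. Since $f$ is strongly Poisson stable, one has $p\in\omega_p$ for every $p\in H(f)$, so by Definition~\ref{defSP1} every $g\in H(f)$ is again strongly Poisson stable (its hull $H(g)\subseteq H(f)$ inherits the same property). Applying the second statement of Theorem~\ref{thBM1} with $y_0:=g$ produces a point $a_{v_0}\in\mathbb R^d$ whose full trajectory $\gamma$, with $\gamma(0)=(a_{v_0},g)$, is relatively compact and \emph{strongly comparable by character of recurrence} with $g$; by Corollary~\ref{corF1} we have $a_{v_0}\in I_g$, so $\varphi(t,\gamma_{v_0},g)$ is defined and bounded on $\mathbb Z$ and lies in the Levinson centre, which gives statements~1 and~2. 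Strong comparability is, by Definition~\ref{defCS01}, exactly strong compatibility of the solution, giving statement~3. Statement~4 then follows by feeding the recurrence type of $g=f$ into the second part of Corollary~\ref{corPM1} (equivalently, by combining strong comparability with Theorems~\ref{thShc2.1} and~\ref{tS2}).

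I expect the main obstacle to be the uniform-over-$Y$ bookkeeping rather than any single hard estimate: one must check that the per-equation uniform Lyapunov stability hypothesis indeed yields the \emph{cocycle}-level positive uniform Lyapunov stability required by Theorem~\ref{thBM1}, and that strong Poisson stability genuinely descends to the whole $H$-class so that the conclusion can be asserted for every equation~(\ref{eq2.1*}). Once these are in place, the result is a direct transcription of the differential-equation argument of Theorem~\ref{thA2} to the discrete-time setting.
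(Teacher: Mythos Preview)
Your proposal is correct and follows essentially the same route as the paper: realise (\ref{eq2.1}) as a cocycle over the Bebutov shift on $Y=H(f)$, and then invoke the second statement of Theorem~\ref{thBM1} together with Corollary~\ref{corPM1}, exactly parallel to the proof of Theorem~\ref{thD1} (and to its continuous-time counterpart Theorem~\ref{thA2}). Your write-up is in fact more detailed than the paper's, which simply refers back to the proof of Theorem~\ref{thD1} and points to Theorem~\ref{thBM1}(ii) and Corollary~\ref{corPM1}.
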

\begin{proof} This statement can be proved similarly as Theorem
\ref{thD1} using  Theorems \ref{thBM1} (the second statement) and
Corollary \ref{corPM1}. Theorem is proved.
\end{proof}

\end{document}